\newtheorem{theorem}{Theorem}[section]
\newtheorem{lemma}[theorem]{Lemma}
\newtheorem{algorithm}[theorem]{Algorithm}
\theoremstyle{definition}
\newtheorem*{defn}{Definition}
\newcommand{\bdry}{\partial}
\newcommand{\cc}[1]{C(#1)}
\newcommand{\comp}[1]{\overline{K}}
\newcommand{\cplex}{\emph{ILOG CPLEX}}
\newcommand{\genus}[1]{g(#1)}
\newcommand{\knotinfo}{\emph{KnotInfo}}
\newcommand{\mobius}{M{\"o}bius}
\newcommand{\normaliz}{\emph{Normaliz}}
\newcommand{\R}{\mathbb{R}}
\newcommand{\regina}{\emph{Regina}}
\newcommand{\snappy}{\emph{SnapPy}}
\newcommand{\tri}{\mathcal{T}}
\newcommand{\vrep}[1]{\mathbf{v}(#1)}
\newcommand{\Z}{\mathbb{Z}}
\newcommand{\co}{\colon\thinspace}
\title{Computing the crosscap number of a knot \\
    using integer programming and normal surfaces}
\author{Benjamin A.~Burton and Melih Ozlen}
\date{March 4, 2012}
\begin{document}

\maketitle

\begin{abstract}
    The crosscap number of a knot is an invariant describing the
    non-orientable surface of smallest genus that the knot bounds.
    Unlike knot genus (its orientable counterpart), crosscap numbers
    are difficult to compute and no general algorithm is known.
    We present three methods for computing crosscap number that offer
    varying trade-offs between precision and speed:
    (i)~an algorithm based on Hilbert basis enumeration and
    (ii)~an algorithm based on exact integer programming, both of which
    either compute the solution precisely or reduce it to two possible values,
    and (iii)~a fast but limited precision integer programming algorithm
    that bounds the solution from above.

    The first two algorithms advance the theoretical state of the art,
    but remain intractable for practical use.
    The third algorithm is fast and effective, which we show in a
    practical setting by making significant improvements
    to the current knowledge of crosscap numbers in knot tables.
    Our integer programming framework is general, with the potential for
    further applications in computational geometry and topology.
%
\end{abstract}

%
%

\section{Introduction}

Knot invariants lie at the heart of computational knot theory.
Nevertheless, computing invariants can be challenging:
algorithms often require complex implementations and exponential running
times, and for some invariants no general algorithm is known.

In this paper we focus on invariants of knots in $\R^3$
that relate to 2-dimensional surfaces:
\emph{knot genus} and \emph{crosscap number}.  In essence,
these measure the simplest embedded orientable and non-orientable
surfaces respectively that have a single boundary curve following the knot.

Knot genus is well-studied: algorithms are known \cite{hass99-knotnp},
and precise values have been computed for all prime knots with $\leq 12$
crossings \cite{www-knotinfo-jun11}.
In contrast, although crosscap numbers can be computed for
special classes of knots
\cite{hirasawa06-2bridge, ichihara10-pretzel, teragaito04-crosscap},
no algorithm is known for computing them in general.
Of all 2977 non-trivial prime knots with $\leq 12$ crossings,
only 289 have crosscap numbers that are known precisely
\cite{www-knotinfo-jun11}.

The crosscap number displays unusual behaviours that knot genus does not,
and embodies different information
\cite{clark78-crosscaps,murakami95-crosscap}.
It is therefore desirable to compute crosscap numbers in a general
setting.  Here we develop three algorithms with varying
precision-to-speed trade-offs, which yield both theoretical and
practical advances.

Our algorithms do not guarantee to compute the crosscap number precisely
for every input (this remains an open problem), but they do come close.
The first two algorithms are significant theoretical advances:
they either compute the crosscap number precisely or reduce it to one of two
possible values.  The third algorithm is a significant practical achievement:
although it only outputs an upper bound on the crosscap number, its
strong practical performance combined with known lower bounds allows us to
make significant improvements to crosscap numbers in existing tables of knots.

The first algorithm, described in Section~\ref{s-hilbert}, uses Haken's
normal surface theory \cite{haken61-knot} to reduce the
computation of crosscap number to a Hilbert basis enumeration
over a high-dimensional polyhedral cone.
Although this general approach follows a common template in
computational topology, there are complications that cause the usual
theoretical techniques to fail.
To address this, we introduce a special class of triangulations
called \emph{suitable triangulations}, described in
Section~\ref{s-suitable}, with which we are able to
solve these theoretical problems.

The second and third algorithms, described in Sections~\ref{s-exact}
and~\ref{s-inexact}, draw on
techniques from discrete optimisation theory.  Here we develop
an integer programming framework that allows us to approach
problems in computational topology using off-the-shelf optimisation software.
The difference between the second and third algorithms is that
the second requires expensive exact integer arithmetic; the third
makes concessions that allow us to use fast off-the-shelf
solvers based on floating point
computation, but with the side-effect that it
only outputs an upper bound.

The first two algorithms remain intractable for all but the simplest
knots, though each has different strengths through which it may become
practical with the growth of supporting software in algebra and optimisation.
In contrast the third algorithm is extremely fast,
and in Section~\ref{s-comp} we run it over the full $12$-crossing knot tables.
The results are extremely pleasing: of the 2688 knots with
unknown crossing number, for 747 we can improve the best-known
bounds, and for a further 27 we can combine our output with known
lower bounds to compute the crosscap number precisely.

The integer programming framework that we introduce here is
general, and has significant potential for use elsewhere in computational
geometry and topology.  We discuss these matters
further in Section~\ref{s-comp}.

%
%

\section{Preliminaries}

Here we give a short summary of concepts from
knot theory and normal surface theory that appear within this paper.
This outline is necessarily brief; for details see
the excellent overview in \cite{hass99-knotnp}.

For our purposes, a knot is a piecewise linear closed
curve embedded in $\R^3$.
We also treat knots as being embedded in the 3-sphere $S^3$, where
$S^3$ is the one-point compactification $\R^3 \cup \{\infty\}$.
A knot $K$ is typically given as a \emph{knot diagram}, which is a
general position projection of the knot onto the plane as illustrated in
Figure~\ref{fig-trefoil}.  See \cite{hass99-knotnp} for more
precise definitions of these concepts.

\begin{figure}[htb]
    \centering
    \subfigure[]{%
        \label{fig-trefoil}
        \includegraphics[scale=0.9]{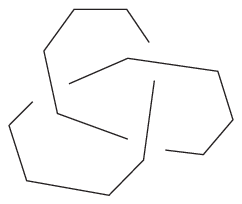}}
    \qquad\qquad
    \subfigure[]{%
        \label{fig-trefoilspan}
        \includegraphics[scale=0.9]{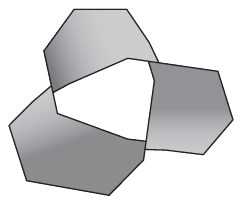}}
    \caption{A knot diagram and a spanning surface for the trefoil knot}
\end{figure}

We are interested in properties of knots related to two-dimensional
surfaces.  In this paper all surfaces are piecewise linear, and might be
disconnected unless otherwise stated.
A surface with no boundary curves (such as a sphere or a torus)
is called \emph{closed}, and a surface with boundary curves (such as a
disc or a {\mobius} band) is called \emph{bounded}.

Every closed orientable surface is topologically a 2-sphere
with $g \geq 0$ orientable
``handles'' attached (i.e., a $g$-holed torus); the \emph{orientable genus}
of such a surface is $g$.
Every closed non-orientable surface is a 2-sphere with $g \geq 1$
``crosscaps'' attached; the \emph{non-orientable genus} of such
a surface is $g$.  We simply use the word \emph{genus}
when orientability is clear from context.
If an (orientable or non-orientable) surface $S$ has boundary, then
the (orientable or non-orientable) genus of $S$ is the genus of the
closed surface obtained by filling each boundary curve of $S$ with a disc.

Let $\mathcal{P}$ be a polygonal decomposition of a surface $S$.
The \emph{Euler characteristic} of $\mathcal{P}$
is defined as $V-E-F$, where $V$, $E$ and $F$ represent
the number of vertices, edges and faces of $\mathcal{P}$ respectively.
Euler characteristic is a topological invariant of a surface, and is
denoted by $\chi(S)$.  The Euler characteristic of an
orientable surface of genus $k$ with $b$ boundary curves is
$2-2k-b$, and the Euler characteristic of a non-orientable surface of
genus $k$ with $b$ boundary curves is $2-k-b$.

The knot invariants that we study here relate to surfaces embedded in
3-dimensional space, which leads us to study 3-manifolds.
A \emph{3-manifold} is a higher-dimensional analogue of a surface: each
interior point of a 3-manifold $M$ has a local neighbourhood that is
topologically similar to $\R^3$, and each point on the boundary of
$M$ has a local neighbourhood that is topologically similar
to the closed half-space $\R^3_{z\geq0}$.
Again, see \cite{hass99-knotnp} for precise details.
A surface $S \subset M$ is \emph{embedded} in $M$ if it has no
self-intersections, and \emph{properly embedded} if in addition
the boundary of $S$ lies within the boundary of $M$, and the
interior of $S$ lies within the interior of $M$.

Consider a knot $K$ embedded in $S^3$.  A \emph{spanning surface} for
$K$ is a connected embedded surface in $S^3$ whose boundary is
precisely $K$.
Figure~\ref{fig-trefoilspan} illustrates a spanning surface for the
trefoil knot (this surface is a {\mobius} band, with non-orientable genus~1).

We can now define the following two invariants of a knot $K$.
The \emph{genus} of $K$, denoted $\genus{K}$, is the smallest $k$ for
which there exists an orientable genus $k$ spanning surface for $K$.
Likewise, the \emph{crosscap number} of $K$, denoted $\cc{K}$, is the
smallest $k$ for which there exists a non-orientable genus $k$ spanning
surface for $K$.  As a special case, the crosscap number of the
trivial knot (also called the \emph{unknot}) is defined to be 0.
The unknot is the only knot with genus 0, and the only knot
with crosscap number 0.  A key relation between genus and
crosscap number is
the following \cite{clark78-crosscaps}:

\begin{theorem}[Clark's inequality] \label{t-clark}
    For any knot $K$, it is true that $\cc{K} \leq 2 \genus{K} + 1$.
\end{theorem}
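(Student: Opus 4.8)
The plan is to start from an optimal orientable spanning surface and perturb it into a non-orientable one at the cost of a single crosscap. Concretely, let $S$ be an orientable spanning surface for $K$ realising the genus, so that $S$ is connected, has orientable genus $g = \genus{K}$, and has a single boundary curve, namely $K$ itself. By the Euler characteristic formula for an orientable surface of genus $k$ with $b$ boundary curves, $\chi(S) = 2 - 2g - 1 = 1 - 2g$.

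Next I would add a single crosscap to $S$ in its interior. Choose a small ball $B \subset S^3$ meeting $S$ in a single trivial disc $D$, with $\bdry D$ an unknotted circle on $\bdry B$. Since $\bdry D$ bounds an embedded {\mobius} band inside $B$ (an unknotted circle spans a half-twisted band in a ball), I can replace $D$ by such a {\mobius} band to obtain a new surface $S'$. Because the modification takes place entirely inside $B$, where the rest of $S$ does not intrude, $S'$ is again embedded in $S^3$; it is connected since $S$ was; it contains a {\mobius} band and is therefore non-orientable; and its boundary is unchanged, so $\bdry S' = K$ is still a single curve. Hence $S'$ is a non-orientable spanning surface for $K$.

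Then I would track the Euler characteristic. Removing the open disc $D$ decreases $\chi$ by one, and gluing in the {\mobius} band (which has $\chi = 0$) along the circle $\bdry D$ (which also has $\chi = 0$) changes nothing further, so $\chi(S') = \chi(S) - 1 = -2g$. Applying the Euler characteristic formula for a non-orientable surface of genus $k$ with one boundary curve, namely $\chi(S') = 2 - k - 1 = 1 - k$, gives $k = 1 - \chi(S') = 2g + 1$. Thus $S'$ is a non-orientable genus $2\genus{K}+1$ spanning surface, and since $\cc{K}$ is the minimum genus over all such surfaces, $\cc{K} \leq 2\genus{K} + 1$. (The unknot is covered by the convention $\cc{K} = 0$, and the inequality $0 \leq 1$ holds trivially.)

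The step I expect to demand the most care is verifying that the crosscap can be inserted while keeping the surface embedded in $S^3$ with the same single boundary curve: the whole force of the bound lives in the fact that this costs exactly one unit of Euler characteristic and no change to $\bdry S$. The localisation inside a small ball $B$ disjoint from the remainder of $S$ is what makes this rigorous, reducing the global embeddedness question to the elementary fact that an unknotted circle bounds an embedded {\mobius} band in a $3$-ball.
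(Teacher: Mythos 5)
Your overall plan---start from a minimal genus Seifert surface and add a single crosscap at the cost of exactly one unit of Euler characteristic, with no change to the boundary---is the right idea, and your genus bookkeeping is correct; but the step you yourself flag as the delicate one is in fact false. A {\mobius} band can never be properly embedded in a $3$-ball $B$: if $M \subset B$ had $\bdry M = \bdry D$ lying on the sphere $\bdry B$, then capping $M$ with one of the two discs that $\bdry D$ bounds in $\bdry B$ would produce a closed projective plane embedded in $B \subset S^3$, and no closed non-orientable surface embeds in $S^3$---the very fact this paper invokes in Lemma~\ref{l-eff-euler} and in the proof of Lemma~\ref{l-nor-normal}. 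An unknotted circle does bound an embedded {\mobius} band in $S^3$ (the half-twisted band), but that band cannot be confined to a ball having the circle on its boundary. A sanity check shows the failure is unavoidable rather than a fixable technicality: if a crosscap could be inserted locally inside a ball meeting the surface in a single disc, the same move applied to a round $2$-sphere would embed a projective plane in $S^3$. So no purely interior, local modification can render an embedded surface in $S^3$ non-orientable; the boundary must be used.

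The repair---which is Clark's actual argument \cite{clark78-crosscaps}; note the paper only quotes this theorem and gives no proof of its own---is to add the crosscap \emph{along the knot}, where the capping obstruction disappears. Concretely: in a small ball around a point of $K$, attach to $S$ a thin band with a half-twist, its ends glued along two short sub-arcs of $K$ and the band passing once over the intervening arc of $K$. The resulting surface $S'$ is embedded; it is non-orientable, since the half-twisted band together with the adjacent strip of $S$ forms a {\mobius} band; and its boundary differs from $K$ only by a small kink inside the ball (a Reidemeister~I curl), hence is still a single curve isotopic to $K$, so $S'$ is a spanning surface. Attaching a band along two boundary arcs subtracts $1$ from the Euler characteristic, so $\chi(S') = \chi(S) - 1 = -2\genus{K}$, and your final computation then applies verbatim: $S'$ has non-orientable genus $1 - \chi(S') = 2\genus{K} + 1$, giving $\cc{K} \leq 2\genus{K} + 1$, with the unknot handled by the convention $\cc{K}=0$ exactly as you note.
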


Let $K \subset S^3$ be a knot, and let $R$ be a small regular
neighbourhood of $K$ in $S^3$.  The \emph{complement} of $K$, denoted
$\comp{K}$, is the closure of $S^3\backslash R$.  This is a 3-manifold with
boundary, obtained by ``eating away'' the knot from $S^3$, as
illustrated in Figure~\ref{fig-knotcomp}.
The boundary surface of $\comp{K}$ is a torus, and any curve on this
torus that bounds a disc in $R$ is called a \emph{meridian}.

\begin{figure}[htb]
    \centering
    \subfigure[]{%
        \label{fig-knotcomp}
        \includegraphics[scale=0.9]{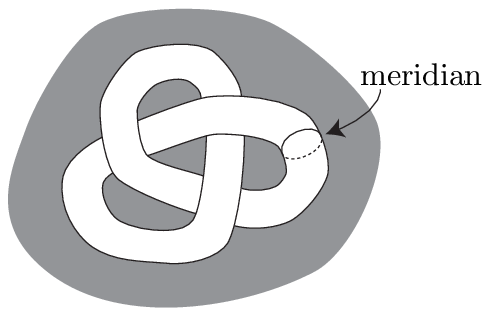}}
    \hspace{2cm}
    \subfigure[]{%
        \label{fig-compspan}
        \includegraphics[scale=0.9]{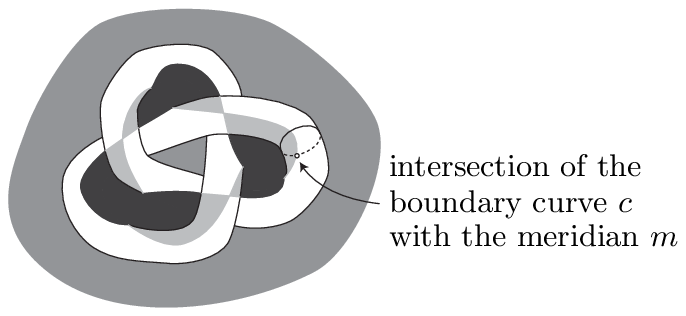}}
    \caption{The complement of the trefoil knot
        and a spanning surface within it}
\end{figure}

We can reformulate spanning surfaces in terms of knot complements.
Consider a knot $K \subset S^3$ and some meridian $m$ on the boundary of
$\comp{K}$.
A \emph{spanning surface} in $\comp{K}$ is a connected,
properly embedded surface $S \subset \comp{K}$ with precisely one
boundary curve $c$, where $c$ and $m$ have algebraic intersection
number $\pm1$ on the torus boundary of $\comp{K}$.\footnote{%
    Essentially we require that the boundary of $S$ cuts $m$ precisely
    once.  Using algebraic intersection number allows us to account
    for any extra trivial ``wiggles'' back and forth across $m$.}
See Figure~\ref{fig-compspan} for an illustration.
This is essentially the same as our previous definition, since
any such surface can be extended through
$R$ to give a connected embedded surface bounded by $K$
and vice versa, assuming that the neighbourhood $R$ is sufficiently small.

All of our algorithms work with triangulations of the complement
$\comp{K}$.  In this paper, a \emph{triangulation} of a 3-manifold is a
collection of $n$ tetrahedra, some of whose $4n$ faces are affinely
identified in pairs.  This broad definition allows for smaller
triangulations than a traditional simplicial complex; in particular,
most triangulations in this paper are \emph{one-vertex triangulations},
where all $4n$ tetrahedron vertices are (as a result of the face
gluings) identified to a single point in $\comp{K}$.

More generally, if $\tri$ is a 3-manifold triangulation, all tetrahedron
vertices that are identified to a single point in $\tri$
are collectively referred to as a
single \emph{vertex} of $\tri$; similarly for edges and
faces.  Any vertex, edge or face that lies in the boundary of the
underlying 3-manifold is called a \emph{boundary} vertex, edge or face;
all others are referred to as \emph{internal}.  Note that the boundary
faces of $\tri$ are precisely those tetrahedron faces that are
not paired with some partner face.

In our algorithms we describe spanning surfaces using normal surface theory.
A \emph{normal surface} in $\tri$ is a properly embedded surface that
meets each tetrahedron $\Delta$ of $\tri$ in a disjoint collection of
triangles and quadrilaterals, each running between
distinct edges of $\Delta$, as illustrated in Figure~\ref{fig-normal}.
There are four \emph{triangle types} and three \emph{quadrilateral types}
according to which edges they meet.
Within each tetrahedron there may be several triangles or quadrilaterals
of any given type; collectively these are referred to as \emph{normal discs}.

\begin{figure}[htb]
    \centering
    \includegraphics[scale=0.5]{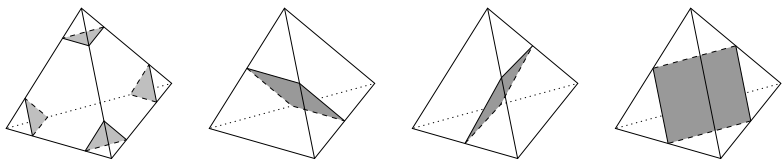}
    \caption{Normal triangles and quadrilaterals within a tetrahedron}
    \label{fig-normal}
\end{figure}

The \emph{vector representation} of a normal surface $S$ is the
$7n$-dimensional integer vector
\[ \vrep{S} = (
        t_{1,1},t_{1,2},t_{1,3},t_{1,4},\ q_{1,1},q_{1,2},q_{1,3}\ ;
        \ t_{2,1},t_{2,2},t_{2,3},t_{2,4},\ q_{2,1},q_{2,2},q_{2,3}\ ;
        \ \ldots,q_{n,3}\ ) \in \Z^{7n}, \]
where $t_{i,j}$ is the number of triangles in the $i$th tetrahedron of
the $j$th type, and $q_{i,k}$ is the number of quadrilaterals in the
$i$th tetrahedron of the $k$th type ($1 \leq i \leq n$, $1 \leq j \leq 4$,
$1 \leq k \leq 3$).

\begin{theorem}[Haken, 1961] \label{t-admissible}
    A vector $\mathbf{v} = (t_{1,1},\ldots,q_{n,3}) \in \R^{7n}$
    is the vector representation of a normal surface if and only if:
    (i)~all elements of $\mathbf{v}$ are non-negative integers;
    (ii)~$\mathbf{v}$ satisfies a certain set of homogeneous linear
    equations derived from the triangulation,
    and
    (iii)~for each $i$, at most one of the three quadrilateral
    coordinates $q_{i,1},q_{i,2},q_{i,3}$ is non-zero.
\end{theorem}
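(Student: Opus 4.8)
The plan is to prove the two directions of the equivalence separately: necessity (the vector representation of any normal surface satisfies (i)--(iii)) and sufficiency (any vector satisfying (i)--(iii) is realised by some normal surface). For necessity, I would start with a normal surface $S$ and its vector $\vrep{S}$, and verify the three conditions in turn. Condition (i) is immediate, since each coordinate counts normal discs of a fixed type and is therefore a non-negative integer. Condition (iii) follows from a purely local observation inside a single tetrahedron: any two quadrilaterals of distinct types must intersect, since the three quadrilateral types are pairwise non-disjoint, so an embedded surface can contain quadrilaterals of at most one type per tetrahedron. The heart of necessity is condition (ii), the matching equations. Here I would analyse how $S$ meets each internal face $f$ of $\tri$: since $f$ cuts $S$, it meets $S$ in a family of \emph{normal arcs}, each separating one corner of the triangular face from the other two, giving exactly three arc types. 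Each such arc is the boundary of a normal disc in one of the two tetrahedra glued along $f$, and for a fixed arc type the contributing discs are precisely one triangle type and one quadrilateral type on each side. Because the arcs seen from the two sides of $f$ are literally the same arcs, the two counts must agree; writing this equality out for every arc type on every internal face yields the homogeneous linear system referred to in (ii).

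For sufficiency, given a vector $\mathbf{v}$ satisfying (i)--(iii), I would build the surface tetrahedron by tetrahedron and then glue. Within each tetrahedron I would place the prescribed number of discs of each type; by (iii) at most one quadrilateral type occurs, and in this situation a standard nesting argument shows that all the discs can be made pairwise disjoint (triangles cutting off the same corner are parallel, triangles cutting off different corners are disjoint, and the single quadrilateral type present is disjoint from every triangle type). It then remains to glue the pieces across internal faces. On each face, the boundary arcs of a given type form a parallel family, so their number alone determines them up to normal isotopy; condition (ii) guarantees that the two sides present the same number of arcs of each type, and these parallel families can then be identified in order. Carrying out this identification across all internal faces assembles the discs into a properly embedded surface $S$, whose vector representation equals $\mathbf{v}$ by construction.

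I expect the main obstacle to lie in the gluing step of the sufficiency direction: one must check not merely that the arc counts agree, but that the matchings across all faces can be realised \emph{simultaneously} to yield a genuinely embedded surface, with no two discs intersecting and with the boundary of $S$ lying in the boundary of the underlying $3$-manifold. The fact that makes this work is that normal arcs of a fixed type on a triangular face are parallel, so an integer count determines the configuration up to isotopy and the gluings across each face are forced; verifying carefully that the resulting identifications are mutually consistent and produce an embedding is the delicate point, whereas the necessity direction together with conditions (i) and (iii) is comparatively routine.
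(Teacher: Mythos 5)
The paper does not prove this statement at all: it is quoted as Haken's 1961 theorem and used as a black box, so there is no internal proof to compare against. Judged on its own terms, your plan is the standard classical argument (the one found in Haken's original paper and in expositions such as Matveev's book or Hass--Lagarias--Pippenger), and it is essentially correct: necessity of (i) is trivial, necessity of (iii) follows because quadrilaterals of two distinct types in one tetrahedron must intersect, and the matching equations arise exactly as you say, by counting normal arcs of each of the three types on each internal face, where each arc type is contributed by precisely one triangle type and one quadrilateral type in each of the two adjacent tetrahedra. Your sufficiency direction also identifies the right delicate point. The fact that resolves it, which you gesture at but could make explicit, is the canonical ordering of intersection points along each \emph{edge} of the triangulation: within a tetrahedron the discs meeting a given edge are forced into the pattern ``triangles nested at one endpoint, then quadrilaterals (of the unique type present, by (iii)), then triangles nested at the other endpoint,'' and since same-type arcs on a face are parallel and never interleave with arcs of another type along an edge, the counts alone determine the configuration on each face up to normal isotopy. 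The gluings across distinct internal faces are then independent of one another (each identification happens in the interior of its own face), so no global consistency obstruction arises and the result is a properly embedded surface realising $\mathbf{v}$; note also that condition (iii) is genuinely needed in this step, since without it the discs inside a single tetrahedron could not be made disjoint. With that one point spelled out, your outline is a complete and correct proof strategy for the theorem.
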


The homogeneous linear equations in (ii) are called the \emph{matching
equations}, and condition (iii) is called the \emph{quadrilateral
constraints}.  The set of all points in $\R^{7n}$ with non-negative
coordinates that satisfy the matching equations is called the
\emph{normal surface solution cone}.

Given a vector that satisfies all of the constraints of
Theorem~\ref{t-admissible},
the corresponding normal surface can be reconstructed uniquely (up to
normal isotopy).  If $X$ and $Y$ are both normal surfaces in some
triangulation, the \emph{normal sum} $X+Y$ is the normal surface with
vector representation $\vrep{X}+\vrep{Y}$.
It is possible that $\vrep{X}+\vrep{Y}$ does not satisfy the
quadrilateral constraints, in which case (for our purposes) the sum
$X+Y$ is not defined.

A \emph{fundamental normal surface} $S$ is one that cannot be written as
$S=X+Y$ for non-empty normal surfaces $X,Y$.
There are finitely many fundamental normal surfaces in a triangulation,
corresponding precisely to the vectors in the
Hilbert basis of the normal surface solution cone.

%
%

\section{Suitable triangulations} \label{s-suitable}

To overcome theoretical difficulties that arise with non-orientable
spanning surfaces, we introduce a special class of
triangulations for our algorithms to use.

\begin{defn}
    A \emph{suitable triangulation} $\tri$
    of a knot complement is one for which:
    \begin{enumerate}[(i)]
        \item $\tri$ has precisely one vertex
            (and therefore the torus boundary of $\tri$ contains
            this one vertex, three edges and two faces);\footnote{%
                This follows by a standard Euler characteristic argument.}
            \label{en-suitable-vtx}
        \item one of the boundary edges of $\tri$ is a meridian.
            \label{en-suitable-meridian}
    \end{enumerate}
\end{defn}

Given a triangulation $\tri$ of a knot complement,
it is easy to test whether $\tri$ is suitable.
Condition~(\ref{en-suitable-vtx})
can be verified by grouping vertices of tetrahedra
into equivalence classes under identification.
For condition~(\ref{en-suitable-meridian}), we can verify that a
boundary edge $e$ is a meridian by attaching a solid torus in an
appropriate fashion
and testing whether the resulting closed manifold is a 3-sphere.\footnote{%
    The solid torus must be attached so that its meridional disc is
    bounded by $e$; this can be done using a $(1,1,0)$ layered solid
    torus as described in \cite{jaco06-layered}.
    Although the subsequent 3-sphere test requires worst-case exponential
    time, it is found to run surprisingly fast
    in practice when the right algorithms
    and simplification heuristics
    are used \cite{burton10-quadoct,burton11-pachner}.}

Our first two algorithms for computing crosscap number
require a further condition:
\begin{defn}
    An \emph{efficient suitable triangulation} $\tri$ of a knot complement
    is a suitable triangulation that contains no
    embedded normal 2-spheres.
\end{defn}

This extra 2-sphere condition is related to, though weaker than,
the 0-efficiency criterion of Jaco and Rubinstein \cite{jaco03-0-efficiency}.
Testing for efficient suitability is possible but slow: to verify the
absence of embedded normal 2-spheres one must
typically enumerate all extreme rays of the high-dimensional
normal surface solution cone \cite{jaco03-0-efficiency}.

We give two algorithms for producing a suitable triangulation of a knot
complement.  The first is general but slow; the second is heuristic in
nature but works extremely well in practice.

\begin{algorithm} \label{alg-suitable}
    Given a knot diagram describing the knot $K \subseteq S^3$,
    the following procedure will output an
    efficient suitable triangulation of $\comp{K}$.
    \begin{enumerate}
        \item Test whether $K$ is the unknot
        \cite{haken61-knot,jaco95-algorithms-decomposition}.
        If so, output a pre-constructed efficient
        suitable triangulation of the unknot complement
        and terminate immediately.
        \label{en-algsuitable-unknot}

        \item Run the procedure of Hass, Lagarias and Pippenger
        \cite[Lemma~7.2]{hass99-knotnp}
        to obtain a triangulation $\tri$ of $\comp{K}$, along with
        a meridian expressed as a path that follows boundary edges
        of $\tri$.

        \item Run the procedure of Jaco and Rubinstein
        \cite[Proposition~5.15 and Theorem~5.20]{jaco03-0-efficiency}
        to convert this into a one-vertex triangulation
        with no embedded normal 2-spheres.
        Keep track of the location of the meridian as the Jaco-Rubinstein
        procedure runs.
        \label{en-algsuitable-0eff}

        \item Apply layerings of extra tetrahedra to alter the
        boundary edges until the meridian consists
        of a single boundary edge, and output the resulting triangulation.

        Each layering attaches a
        new tetrahedron along two boundary faces as illustrated in
        Figure~\ref{fig-layering}.  The number and locations of these layerings
        are determined by a continued fraction calculation, as described
        by Jaco and Rubinstein \cite{jaco06-layered,jaco03-decision}.
        \label{en-algsuitable-layer}
    \end{enumerate}
\end{algorithm}

\begin{figure}[htb]
    \centering
    \includegraphics[scale=0.9]{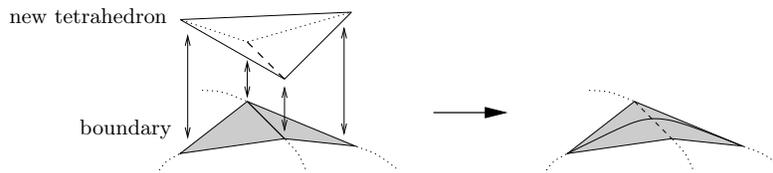}
    \caption{Layering a new tetrahedron onto the boundary}
    \label{fig-layering}
\end{figure}

The unknot test in step~\ref{en-algsuitable-unknot} is necessary
because the Jaco-Rubinstein procedure in step~\ref{en-algsuitable-0eff}
requires $\comp{K}$ to be both irreducible and boundary irreducible,
which is only true for non-trivial knots \cite{hemion92}.
If $K$ is the unknot, the one-tetrahedron solid torus
\cite{jaco03-0-efficiency} can be used as a pre-constructed solution.

\begin{theorem}
    Algorithm~\ref{alg-suitable} is correct, i.e., it produces an
    efficient suitable triangulation of $\comp{K}$.
\end{theorem}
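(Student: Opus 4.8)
The plan is to verify that the algorithm's output satisfies every defining condition of an efficient suitable triangulation: that it triangulates $\comp{K}$, has a single vertex (condition~(\ref{en-suitable-vtx})), carries a meridian among its boundary edges (condition~(\ref{en-suitable-meridian})), and admits no embedded normal 2-sphere. I would first dispose of the trivial branch in step~\ref{en-algsuitable-unknot}: when $K$ is the unknot the algorithm returns a fixed, pre-constructed triangulation, so it suffices to check once — by direct inspection of the one-tetrahedron solid torus \cite{jaco03-0-efficiency} — that this triangulation is one-vertexed, has a boundary meridian, and carries no normal 2-sphere. Thereafter I may assume $K$ is non-trivial, so that $\comp{K}$ is irreducible and boundary-irreducible \cite{hemion92}, which is precisely the hypothesis required by the Jaco--Rubinstein procedure invoked in step~\ref{en-algsuitable-0eff}.

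Next I would track these invariants through the remaining steps. The second step produces a genuine triangulation of $\comp{K}$ together with a meridian realised as an edge-path on the boundary \cite[Lemma~7.2]{hass99-knotnp}; since neither the Jaco--Rubinstein moves nor the subsequent layerings alter the underlying 3-manifold, the final output will again triangulate $\comp{K}$. Step~\ref{en-algsuitable-0eff} converts this into a one-vertex triangulation $\tri$ with no embedded normal 2-spheres \cite[Proposition~5.15 and Theorem~5.20]{jaco03-0-efficiency}, carrying the meridian along as a (possibly long) edge-path. At this stage $\tri$ already meets condition~(\ref{en-suitable-vtx}) and the efficiency condition, so what remains is to reduce the meridian to a single boundary edge in step~\ref{en-algsuitable-layer} without destroying either property.

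The core of the argument is therefore the analysis of the layerings. That a layering leaves the underlying manifold unchanged, and that the number and positions of layerings dictated by the continued-fraction expansion of the meridian slope terminate with the meridian realised as a single boundary edge, are established in the Jaco--Rubinstein layered solid-torus theory \cite{jaco06-layered,jaco03-decision}, which I would invoke directly to secure condition~(\ref{en-suitable-meridian}). The one-vertex property is easy to preserve: a layering glues a new tetrahedron along two adjacent boundary faces, so all of its vertices are identified with the unique existing vertex, leaving condition~(\ref{en-suitable-vtx}) intact. The delicate point — and the one I expect to be the main obstacle — is verifying that layering introduces no embedded normal 2-sphere. Here I would argue normal-surface-theoretically: a hypothetical normal 2-sphere in the layered triangulation meets each newly layered tetrahedron in a collection of normal discs, and I would show that such a sphere can be normalised back across the layering into $\tri$, contradicting the absence of normal 2-spheres guaranteed by step~\ref{en-algsuitable-0eff}. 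Irreducibility of $\comp{K}$ persists throughout, and because the single vertex lies on the boundary its link is the torus rather than a sphere, so no essential or vertex-linking sphere can arise either. Assembling these observations across the full sequence of layerings completes the verification that the final triangulation is efficient suitable.
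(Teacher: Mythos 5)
Your overall structure matches the paper's proof: delegate correctness of each sub-procedure to the source papers, note that step~\ref{en-algsuitable-0eff} already secures the one-vertex and no-normal-sphere conditions, and check that the layerings of step~\ref{en-algsuitable-layer} achieve the meridian condition while preserving the other two. But at the one step you yourself flag as the main obstacle --- that layering introduces no embedded normal 2-sphere --- you never actually give an argument, and the two fallbacks you sketch would not close the gap. The plan to ``normalise the sphere back across the layering'' is left entirely undeveloped: pushing a surface across a layered tetrahedron and re-normalising is exactly the kind of topology-altering operation one cannot control without further work, and nothing you say guarantees the result is a normal sphere of the \emph{old} triangulation. Worse, your topological fallback (irreducibility of $\comp{K}$ persists; the vertex link is a torus, so no vertex-linking sphere arises) conflates a property of the manifold with a property of the triangulation: the absence of embedded normal 2-spheres is a combinatorial condition on $\tri$, and a triangulation of an irreducible manifold can perfectly well contain a normal 2-sphere that is topologically trivial (it bounds a ball). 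Indeed, if irreducibility sufficed, the Jaco--Rubinstein machinery invoked in step~\ref{en-algsuitable-0eff} would be unnecessary; so irreducibility alone cannot substitute for the missing step.

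The argument the paper gives is short and purely combinatorial, with no normalisation needed. When a tetrahedron $\Delta$ is layered onto two boundary faces, the two remaining faces of $\Delta$ become boundary faces of the new triangulation. Every normal quadrilateral in $\Delta$ meets all four faces of $\Delta$, and every normal triangle meets all but one face; since two of the four faces of $\Delta$ now lie in the boundary, \emph{every} normal disc type in $\Delta$ meets the boundary of the new triangulation. A normal 2-sphere is closed, hence disjoint from the boundary, so it cannot use any normal disc inside $\Delta$ at all; it must therefore have been an embedded normal 2-sphere of the previous triangulation, contradicting what step~\ref{en-algsuitable-0eff} guarantees. With this substituted for your sketched step, the remainder of your proposal (the unknot branch via the one-tetrahedron solid torus, preservation of the one-vertex condition under layering, and the continued-fraction termination of the layering sequence from \cite{jaco06-layered,jaco03-decision}) agrees with the paper's proof.
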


\begin{proof}
    For the correctness of each sub-procedure we refer the reader to the
    source papers
    \cite{haken61-knot,hass99-knotnp,jaco03-0-efficiency,
    jaco06-layered,jaco03-decision,jaco95-algorithms-decomposition}.
    Here we simply prove that the requirements for an efficient suitable
    triangulation are satisfied.

    The Jaco-Rubinstein procedure in step~\ref{en-algsuitable-0eff}
    gives a triangulation with no embedded normal 2-spheres and just one vertex.
    The only missing requirement is that some boundary edge is a
    meridian.  The layering process in step~\ref{en-algsuitable-layer}
    fixes this, and does not break the other requirements:
    \begin{itemize}
        \item Each layering preserves the number of vertices of the
        triangulation.

        \item The layering process does not introduce any new embedded
        normal 2-spheres.  Each time we layer a new tetrahedron onto the
        boundary, every normal triangle or quadrilateral in this new
        tetrahedron meets the boundary of the new triangulation.
        Therefore any embedded normal 2-sphere in the new triangulation cannot
        use these new normal discs, and must
        have been an embedded normal 2-sphere in the old triangulation also.
        \qedhere
    \end{itemize}
\end{proof}

Steps~\ref{en-algsuitable-unknot} and~\ref{en-algsuitable-0eff}
of the previous algorithm are slow; although
step~\ref{en-algsuitable-unknot} can be avoided (e.g., by using prior
information that the input knot is non-trivial), the Jaco-Rubinstein
procedure of step~\ref{en-algsuitable-0eff} remains too inefficient to use
with all but the simplest knot complements.

We therefore offer an alternative, heuristic algorithm that uses only
fast (small polynomial time) operations.
The drawback is that this heuristic algorithm might not produce
any solution at all; however, experience shows this to be a rare
occurrence, as discussed below.

\begin{algorithm} \label{alg-suitable-simp}
    Given a knot diagram describing the knot $K \subseteq S^3$,
    the following heuristic procedure will either output a suitable
    triangulation of $\comp{K}$ or terminate with no output at all.
    \begin{enumerate}
        \item Run the procedure of Hass, Lagarias and Pippenger
        \cite[Lemma~7.2]{hass99-knotnp}
        to obtain a triangulation $\tri$ of $\comp{K}$, along with
        a meridian expressed as a path that follows boundary edges
        of $\tri$.
        \item Simplify the triangulation using fast local operations
        (such as edge collapses, Pachner moves, book closing moves
        and related operations \cite{burton04-regina,burton12-ws})
        to reduce the number of tetrahedra and the number of boundary faces
        as far as possible.
        \label{en-algsuitable-heur-simp}
        \item Test whether the resulting triangulation $\tri'$ is
        suitable.  If so then output $\tri'$, and otherwise terminate
        with no output.
        \label{en-algsuitable-heur-test}
    \end{enumerate}
\end{algorithm}

The choice of local simplification operations in
step~\ref{en-algsuitable-heur-simp} is not important;
see \cite{burton04-regina,burton12-ws} for details.
Although the suitability test in step~\ref{en-algsuitable-heur-test}
requires 3-sphere recognition (which runs in worst-case exponential time),
we can sidestep this by tracking the location of the meridian throughout
step~\ref{en-algsuitable-heur-simp}, avoiding the need to verify
the meridian condition (and test for 3-spheres)
in step~\ref{en-algsuitable-heur-test}.

In Section~\ref{s-comp} we observe that this heuristic
algorithm outputs a suitable triangulation for all 2977
non-trivial prime knots with $\leq 12$ crossings, showing it to be
extremely effective in practice.

We finish this section with some useful properties of suitable
triangulations.

\begin{lemma} \label{l-eff-euler}
    Let $\tri$ be an efficient suitable triangulation of a knot
    complement.  Then every closed normal surface embedded in $\tri$
    has Euler characteristic $\leq 0$.
\end{lemma}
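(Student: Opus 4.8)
The plan is to reduce the statement to its connected pieces and then invoke the two relevant hypotheses—the absence of normal 2-spheres, and the ambient manifold being a knot complement—to eliminate the only two closed surfaces of positive Euler characteristic. First I would observe that Euler characteristic is additive over connected components, so if a closed normal surface $S$ had $\chi(S) > 0$, then some connected component $C$ of $S$ would satisfy $\chi(C) > 0$. Since $C$ is a closed connected surface, the classification of surfaces forces $C$ to be either a 2-sphere ($\chi = 2$) or a projective plane $\mathbb{RP}^2$ ($\chi = 1$); these are the only closed surfaces with positive Euler characteristic. I would also note that each connected component of a normal surface is itself an embedded normal surface, so the whole analysis stays within normal surface theory.

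Next I would rule out the projective plane. The key point is that $\comp{K}$ sits inside $S^3$, so any closed surface embedded in $\comp{K}$ is also embedded in $S^3$. A closed surface embedded in $S^3$ must separate it (since $H_2(S^3;\Z)=0$ forces the surface to be null-homologous), and a separating surface in an orientable 3-manifold is two-sided and hence orientable. Therefore no closed non-orientable surface can embed in $S^3$, and in particular $C$ cannot be $\mathbb{RP}^2$.

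It then remains to rule out the 2-sphere, which is immediate: if $C$ were a 2-sphere it would be an embedded normal 2-sphere in $\tri$, contradicting the assumption that $\tri$ is efficient. Hence no component of $S$ can have positive Euler characteristic, and additivity gives $\chi(S) \leq 0$.

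The main obstacle is the projective-plane case: ruling out an embedded $\mathbb{RP}^2$ is exactly where the efficiency hypothesis alone does not suffice, and one must use that the underlying manifold is a knot complement. The cleanest route is the homological embedding obstruction in $S^3$ sketched above. An alternative would be to pass to the orientation double cover of a one-sided $\mathbb{RP}^2$ in order to produce a normal 2-sphere and again invoke efficiency, but this requires more care to keep the resulting surface embedded and normal, so I would prefer the ambient $S^3$ argument.
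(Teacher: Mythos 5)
Your proposal is correct and takes essentially the same route as the paper: efficiency rules out normal $2$-spheres, and the classical non-embeddability of closed non-orientable surfaces (in particular $\mathbb{RP}^2$) in $S^3$ rules out the remaining positive-$\chi$ case, with your component-wise reduction merely making explicit what the paper leaves implicit. One small correction to your sketch: a non-orientable surface has no integral fundamental class, so the separation argument should invoke $H_2(S^3;\Z/2)=0$ (or Alexander duality with $\Z/2$ coefficients) rather than $H_2(S^3;\Z)=0$.
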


\begin{proof}
    The efficiency criterion ensures that there are no embedded normal
    2-spheres.  The only other closed surface of positive Euler
    characteristic is the projective plane, which does not embed in
    $\R^3$ and so cannot embed in any knot complement.
\end{proof}

\begin{lemma} \label{l-spanning-cut}
    Let $K$ be any knot, let $\tri$ be any suitable triangulation
    of its complement with meridional boundary edge $m$,
    and let $S$ be any normal surface in $\tri$.
    Then $S$ is a spanning surface for $K$ if and only if $S$
    has no closed components and $S$ meets edge $m$ in precisely one point.
\end{lemma}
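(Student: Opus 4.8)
The plan is to prove the two implications separately, working entirely on the boundary torus $\partial\comp{K}$. By suitability this torus carries a one-vertex triangulation with exactly three edges, one of which is the meridian $m$. Since $S$ is properly embedded we have $S\cap\partial\comp{K}=\partial S$, so the intersection points of $S$ with the edge $m$ are exactly the points where the boundary curves of $S$ cross the loop $m$; thus the condition ``$S$ meets $m$ in precisely one point'' says that $\partial S$ crosses $m$ geometrically once.

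For the forward implication, suppose $S$ is a spanning surface. By definition $S$ is connected with a single boundary curve $c$, so it has no closed components. Its boundary curve has algebraic intersection $\pm1$ with $m$, so $c$ is an essential (in particular non-trivial, non-meridian-parallel) simple closed curve on the torus. As $c$ is a connected essential normal curve I would invoke the standard fact that such a curve is taut, realising the minimal geometric intersection with each edge; on a torus the geometric intersection of two essential curves equals the absolute value of their algebraic intersection, so $c$ crosses $m$ exactly once and $|S\cap m|=1$.

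For the converse, suppose $S$ has no closed components and $|S\cap m|=1$. The boundary curves of $S$ are disjoint embedded simple closed curves whose total number of crossings with $m$ is $1$; hence exactly one boundary curve crosses $m$ (necessarily once) and every other boundary curve is disjoint from $m$. The \emph{crux} is to show that there are no such extra boundary curves. Granting this, the proof closes immediately: the unique boundary curve $c_0$ crosses $m$ once, so (a null-homotopic curve would cross the essential curve $m$ an even number of times) $c_0$ is essential and has algebraic intersection $\pm1$ with $m$; and a surface with a single boundary curve and no closed components must be connected, since any extra component would be closed. Thus $S$ is a spanning surface.

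The \emph{main obstacle} is therefore the exclusion of boundary curves disjoint from $m$. This cannot be done by curve combinatorics on the torus alone, because such normal curves genuinely exist: a boundary curve disjoint from $m$ is either null-homotopic or parallel to $m$, and a meridian-parallel normal curve really is disjoint from the edge $m$. My plan is to use the three-dimensional topology of $\comp{K}$ together with the no-closed-component hypothesis. Since $H_1(\comp{K})\cong\Z$ is generated by the meridian while the longitude is null-homologous, $[\partial S]$ has trivial meridian coefficient, which forces any meridian-parallel boundary curves to occur in oppositely-oriented pairs. I would then remove such curves by innermost-disc and annulus arguments: a null-homotopic boundary curve bounds a disc on the torus which, pushed into $\comp{K}$, either compresses $S$ or splits off a closed piece, contradicting the hypothesis; a complementary pair of meridian-parallel curves cobounds a boundary annulus that can be absorbed similarly. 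The delicate point --- and where I expect the real work to lie --- is carrying out these simplifications compatibly with normality, most cleanly by performing them in the isotopy class and re-normalising, and checking that no step reintroduces a closed component or a second crossing of $m$.
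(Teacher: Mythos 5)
Your forward direction is essentially the paper's (the paper cites the same fact, from Jaco--Sedgwick, that a normal curve on the two-triangle torus with algebraic intersection $\pm1$ with $m$ meets $m$ geometrically once). The genuine gap is in the converse, at exactly the point you flag as the crux. You claim the exclusion of extra boundary curves ``cannot be done by curve combinatorics on the torus alone'' because meridian-parallel normal curves disjoint from the edge $m$ genuinely exist --- but you have only used disjointness from $m$, and you have forgotten that any extra boundary curve $c'$ is also disjoint from the curve $c_0$ that crosses $m$ once, since both are boundary curves of the single embedded surface $S$. On a torus, an essential simple closed curve disjoint from $c_0$ must be parallel to $c_0$, hence has algebraic intersection $\pm1$ with $m$ and contributes at least one further crossing of $m$, contradicting the one-point hypothesis; in particular a meridian-parallel curve cannot even be disjoint from $c_0$, since $c_0$ has intersection number $\pm1$ with the meridian class. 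The only remaining possibility, a trivial curve, is also killed combinatorially: the unique trivial \emph{normal} curve on the two-triangle torus is the vertex-linking curve, which cuts every edge --- including $m$ --- twice, so it too violates the hypothesis directly. (This also refutes your premise that a null-homotopic normal boundary curve can avoid $m$.) This is precisely the paper's argument: pure normal-curve combinatorics on $\bdry\tri$, with no homology and no three-dimensional topology required.

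Beyond being unnecessary, your proposed repair is wrong in kind. The lemma is an equivalence about the \emph{given} normal surface $S$: you must show that $S$ itself has a single boundary curve and is spanning. Your plan --- innermost-disc and annulus surgeries followed by re-normalisation --- modifies $S$, so at best it would produce \emph{some} spanning surface in the same triangulation, not prove that $S$ is one; and re-normalisation can change the topology and boundary pattern of a surface in delicate ways (this is exactly the difficulty analysed at length in the proof of Lemma~\ref{l-nor-normal}), so even that weaker conclusion would require substantial work. Replace the entire homology-and-surgery apparatus with the disjointness-from-$c_0$ observation above and the proof closes as you outlined.
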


\begin{proof}
    Let $\bdry \tri$ denote the two-triangle torus on the boundary of
    $\tri$, and let $\bdry S$ denote the collection of boundary curves
    of $S$.  Each curve of $\bdry S$ is a \emph{normal curve}
    on $\bdry \tri$; that is, a union of arcs between distinct edges
    of $\bdry \tri$ as illustrated in Figure~\ref{fig-normalcurves}.

    \begin{figure}[htb]
        \centering
        \includegraphics{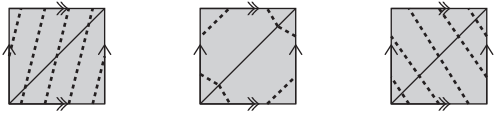}
        \caption{Examples of normal curves on a two-triangle torus}
        \label{fig-normalcurves}
    \end{figure}

    If $S$ is spanning then $S$ is connected and has no
    closed components; moreover, $\bdry S$ consists of a single
    normal curve $c$ whose algebraic intersection number with $m$ is $\pm1$.
    It is a property of normal curves on a two-triangle torus
    that every such curve meets $m$ in precisely one point
    \cite{jaco03-decision}.

    Conversely, suppose that $S$ has no closed components
    and meets $m$ in precisely one point.
    Then $S$ has some
    boundary curve $c$ whose algebraic intersection number with $m$ is
    $\pm1$.  Because the boundary curves of $S$ are disjoint and
    $\bdry \tri$ is a torus, any other
    boundary curve $c'$ of $S$ must be parallel to $c$ or trivial in
    $\bdry \tri$, both of which would generate additional intersections
    with $m$ (note that the only trivial \emph{normal} curve
    on a two-triangle torus cuts each edge twice).
    Therefore $c$ is the only boundary curve of $S$.

    Since $S$ has no closed components it follows that $S$ is a
    connected surface with boundary $c$, and since $c$ has algebraic
    intersection number $\pm1$ with $m$ it follows that $S$ is spanning.
\end{proof}

%
%

\section{A Hilbert basis algorithm} \label{s-hilbert}

Our first algorithm for computing the crosscap number $\cc{K}$
follows a common pattern for topological algorithms: it is based
on the enumeration of fundamental normal surfaces.

It is worth revisiting the algorithm for the orientable
counterpart of $\cc{K}$,
the knot genus $\genus{K}$, as described by Hass, Lagarias
and Pippenger \cite{hass99-knotnp}.  Their algorithm is based on the
following result:
\begin{theorem}[Hass, Lagarias and Pippenger, 1999] \label{t-or-fund}
    Let $K$ be any knot, and $\tri$ be any triangulation of its
    complement.
    Then there is a fundamental normal orientable spanning surface of
    genus $\genus{K}$.
\end{theorem}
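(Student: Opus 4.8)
The plan is to realise $\genus{K}$ by a normal surface and then show that a least-weight such realisation is forced to be fundamental. First I would start from a minimal-genus Seifert surface for $K$; since such a surface is essential (incompressible and $\bdry$-incompressible) it can be isotoped into normal form without changing its topological type, yielding a normal orientable spanning surface $S$ with $\chi(S) = 1 - 2\genus{K}$. (A connected orientable surface with a single boundary curve and genus $k$ has $\chi = 1 - 2k$, so minimising genus is exactly maximising $\chi$ over orientable spanning surfaces.) Among all normal orientable spanning surfaces of genus $\genus{K}$ I would then choose one, still called $S$, whose vector $\vrep{S}$ has least weight, say least $L^1$-norm. The goal is to prove that this $S$ is fundamental.

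Second, suppose for contradiction that $S = X + Y$ for non-empty normal surfaces $X,Y$. Two linear quantities are additive across a normal sum and drive the argument. The first is Euler characteristic, $\chi(X) + \chi(Y) = \chi(S) = 1 - 2\genus{K}$. The second is the geometric number of intersection points with the meridional boundary edge $m$, which is a non-negative integer linear functional of the coordinate vector; since a spanning surface meets $m$ in exactly one point (the general-triangulation analogue of the bookkeeping in Lemma~\ref{l-spanning-cut}), exactly one summand—say $X$—meets $m$ once while $Y$ misses $m$ entirely. Hence every boundary curve of $Y$ is disjoint from $m$, so $Y$ is either closed or bounded by meridian-parallel curves, and each of its components has $\chi \le 0$ unless it is a sphere. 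Restricting $X$ to the component $X'$ carrying the meridian-crossing boundary curve gives a connected surface meeting $m$ once, hence a spanning surface, and $\chi(X') \ge \chi(S)$ would force $\genus{X'} \le \genus{K}$ at strictly smaller weight, contradicting the choice of $S$—provided I can confirm that $X'$ is orientable and that nothing discarded carried positive Euler characteristic.

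Those two provisos are exactly where the real work lies. The first obstacle is orientability: a normal summand of a two-sided surface need not itself be two-sided, so I must arrange the decomposition to keep $X'$ orientable. I would do this with a regular-exchange argument—cut-and-paste along the circles and arcs of $X \cap Y$—using that $S$ is two-sided to orient the retained patches consistently. The second obstacle is a sphere summand, with $\chi = +2$, which is precisely how a general (non-efficient) triangulation can spoil the crude Euler-characteristic split. Here I would invoke irreducibility of $\comp{K}$ for non-trivial $K$ (the unknot, with $\genus{K}=0$, being a trivial base case): a normal $2$-sphere bounds a ball and can be compressed away or discarded without altering the genus of the retained spanning side, so after removing all such inessential pieces the bound $\chi(X') \ge \chi(S)$ survives. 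Controlling orientability and sphere summands at the same time is the delicate heart of the proof, and it is exactly the point at which—as the authors emphasise—this orientable template succeeds while its non-orientable counterpart breaks down, motivating the suitable-triangulation machinery developed later.
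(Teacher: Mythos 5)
First, a point of orientation: the paper offers no proof of this statement---it is imported verbatim from Hass, Lagarias and Pippenger \cite{hass99-knotnp}---so your proposal can only be judged against that source and against the paper's own machinery. What you have written is, in effect, a transplant of the paper's proof of the \emph{non-orientable} analogue, Theorem~\ref{t-nor-fund}, into a statement whose hypotheses are strictly weaker, and the transplant fails at exactly the two points where Theorem~\ref{t-nor-fund} leans on its stronger hypotheses.

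The first gap is your central bookkeeping device. The theorem is stated for \emph{any} triangulation of the complement, but a meridional boundary edge $m$, and the characterisation ``spanning iff no closed components and meets $m$ exactly once,'' exist only for \emph{suitable} triangulations (Section~\ref{s-suitable}); Lemma~\ref{l-spanning-cut} relies essentially on the two-triangle torus boundary, e.g.\ on the fact that the unique trivial normal curve there cuts every edge twice. There is no ``general-triangulation analogue'': the boundary torus of an arbitrary triangulation may carry many vertices and edges, none of them a meridian, a spanning surface can cross every boundary edge many times, and the quantity that \emph{is} available in general---the algebraic intersection number of the boundary homology classes with the meridian slope, with $[\bdry X]+[\bdry Y]=[\bdry S]$---is not a non-negative linear functional of normal coordinates. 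The two summands could contribute, say, $+3$ and $-2$, so your dichotomy ``exactly one summand meets $m$ once while the other misses it entirely,'' which is what makes $X'$ a spanning surface, is simply not forced.

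The second and more serious gap is your treatment of positive-$\chi$ summands, which you correctly identify as the heart of the matter but then resolve incorrectly. In an arbitrary triangulation, normal $2$-spheres ($\chi=+2$, e.g.\ vertex links of internal vertices) and boundary-parallel normal discs ($\chi=+1$) genuinely occur---this is precisely why the paper introduces \emph{efficient} suitable triangulations and Lemma~\ref{l-eff-euler}. If $Y$ contains such a piece, additivity gives $\chi(X')<\chi(S)$, so the retained spanning piece has \emph{larger} genus; discarding the sphere does not restore $\chi(X')\geq\chi(S)$, and no contradiction arises with either minimality hypothesis, since $w(X)<w(S)$ is harmless when $X'$ is not a minimal-genus spanning surface. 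Irreducibility alone, as you invoke it, is not the cure. What Hass--Lagarias--Pippenger actually exploit is that a minimal-genus Seifert surface is \emph{essential}, combined with least-weight exchange arguments in the style of Jaco--Oertel showing that a least-weight essential surface admits no Haken decomposition with trivial (sphere or disc) summands---the very machinery the paper points out is unavailable in the non-orientable setting, since minimal-genus non-orientable spanning surfaces can all be non-essential. The same essential/least-weight analysis, not a generic ``orient the retained patches'' exchange, is what controls orientability of the summand. Finally, the unknot is not quite a trivial base case for you: there $\comp{K}$ is a boundary-reducible solid torus, so the essential-surface normalisation you start from does not apply, and the existence of a fundamental normal spanning disc needs Haken's separate disc argument.
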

The algorithm for computing $\genus{K}$ is then to enumerate all
fundamental normal surfaces in $\tri$, and to observe the smallest genus
orientable spanning surface that appears.

For computing crosscap number, things are less straightforward: it is
not even known whether there must be a \emph{normal} non-orientable
spanning surface of non-orientable genus $\cc{K}$, let alone a
fundamental normal surface.  The arguments of Hass, Lagarias and
Pippenger use the fact that any minimal genus orientable spanning surface
is essential\footnote{%
    That is, both incompressible and boundary incompressible.
    Further details are not required here.};
however, these arguments do not translate to the non-orientable case
since there are knots for which \emph{every}
minimal genus non-orientable spanning surface is non-essential
\cite{bessho94-incompressible,ichihara02-boundary}.

Our solution is twofold: we work with efficient suitable triangulations,
which allow us to obtain precise results in many cases, and for those
cases that remain we use Clark's inequality to reduce the solution to
one of two possible values.

\begin{lemma} \label{l-nor-normal}
    Let $K$ be any non-trivial knot, and $\tri$ be any suitable triangulation
    of its complement.
    Then either there is a normal non-orientable spanning surface of
    non-orientable genus $\cc{K}$, or else $\cc{K} = 2g(K)+1$.
\end{lemma}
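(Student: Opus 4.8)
The plan is to start from any non-orientable spanning surface $S_0$ of minimal non-orientable genus $\cc{K}$, realised inside the suitable triangulation $\tri$ (after an initial normalisation/isotopy so that $S_0$ is a properly embedded surface meeting the triangulation sensibly). By Lemma~\ref{l-spanning-cut}, being spanning is equivalent to having no closed components and meeting the meridional edge $m$ in exactly one point. The strategy is to run the standard normalisation process --- the sequence of normal-surface simplification moves (isotopies across tetrahedra, compressions along trivial curves, and removal of obvious discs/spheres) that drives an arbitrary embedded surface towards normal form --- while tracking two quantities: whether the surface stays non-orientable, and how the intersection number with $m$ behaves. If normalisation can be carried out without destroying either property, we land on a normal non-orientable spanning surface whose genus is no larger than $\cc{K}$; since $\cc{K}$ is minimal, its genus is exactly $\cc{K}$, giving the first alternative.

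The key steps, in order, would be: (1) put $S_0$ into a position where the normalisation moves apply; (2) observe that each normalisation move either leaves the surface unchanged topologically or reduces its complexity by cutting along a compressing disc or discarding a sphere/disc component; (3) control the intersection with $m$ --- because $\tri$ is suitable, $m$ is a single boundary edge, and by the argument in Lemma~\ref{l-spanning-cut} any spanning surface must meet $m$ exactly once, so the moves that could change the boundary behaviour are limited; and (4) track orientability across the moves. The crucial dichotomy arises precisely here: a compression or a boundary-compression might be \emph{orientation-reducing}, i.e.\ it could turn the non-orientable surface into an orientable one. If every normalisation move we are forced to apply preserves non-orientability, we obtain a normal non-orientable spanning surface and we are done. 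If instead we are forced into an orientation-reducing compression, then the resulting orientable surface, together with Clark's inequality (Theorem~\ref{t-clark}), should pin $\cc{K}$ down to the value $2g(K)+1$.

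For the second alternative the plan is to argue as follows. Suppose normalisation forces us to compress $S_0$ in a way that lowers the genus or changes orientability; track what this compression produces. The natural expectation is that an orientation-reducing operation on a minimal non-orientable spanning surface, combined with the fact that $S_0$ was non-orientable of minimal genus, yields an orientable spanning surface whose genus bounds $g(K)$ from above; then Clark's inequality $\cc{K} \leq 2g(K)+1$ together with a matching lower bound (coming from the failure of a smaller non-orientable surface to exist) forces equality $\cc{K} = 2g(K)+1$. The point is that in the non-generic case where no normal representative of the minimal non-orientable genus exists, the only obstruction is that the surface is ``too close'' to being orientable, and this closeness is exactly what pushes $\cc{K}$ up to the extreme value permitted by Clark.

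The main obstacle I expect is controlling orientability through the normalisation process: normal-surface normalisation is usually stated and analysed for the underlying topological type, and the standard references establish that one can normalise an embedded surface without raising its Euler characteristic, but they do not automatically track whether non-orientability survives each compression. The delicate point is to show that the \emph{only} way normalisation can fail to deliver a non-orientable normal spanning surface is via an orientation-reducing move, and that when such a move occurs it genuinely forces $\cc{K} = 2g(K)+1$ rather than some intermediate value. Handling the interaction between boundary behaviour near $m$ and the compressions --- ensuring we never accidentally create closed components or extra boundary curves that would violate the spanning condition of Lemma~\ref{l-spanning-cut} --- is where the real care is needed, and suitability of $\tri$ (one vertex, $m$ a single boundary edge) is what I would lean on to keep that bookkeeping tractable.
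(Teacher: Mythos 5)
Your overall strategy is the paper's: normalise a minimal-genus non-orientable spanning surface while tracking orientability and the intersection with the meridional edge $m$, and invoke Clark's inequality when non-orientability is lost. But the decisive content is left as ``expectation,'' and the one concrete prediction you commit to is wrong in a way that matters. The true dichotomy is governed not by ``orientation-reducing versus orientation-preserving'' moves but by the Euler characteristic bookkeeping of \emph{internal versus boundary} compressions (in the non-separating case). A non-separating \emph{internal} compression yields a connected spanning surface $U$ with $\chi(U)=\chi(S)+2$; minimality of $\cc{K}$ forces $U$ orientable, of orientable genus $\frac12(\cc{K}-2)$, whence $\genus{K}\leq\frac12(\cc{K}-2)$, i.e.\ $\cc{K}\geq 2\genus{K}+2$ --- this \emph{contradicts} Clark's inequality (Theorem~\ref{t-clark}), so that case never occurs at all; it does not ``pin down'' $\cc{K}=2\genus{K}+1$ as you predict. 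Only a non-separating \emph{boundary} compression, with $\chi(U)=\chi(S)+1$ and orientable genus $\frac12(\cc{K}-1)$, gives $\cc{K}\geq 2\genus{K}+1$ and hence, with Clark, equality. Relatedly, your gloss on where the lower bound comes from is backwards: it is not ``the failure of a smaller non-orientable surface to exist'' but the upper bound on $\genus{K}$ furnished by the compressed orientable spanning surface. Without this arithmetic distinguishing the two cases, your argument cannot exclude intermediate values --- which is exactly the worry you raise yourself and do not resolve.

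The second genuine gap is the separating case. You budget only for ``removal of obvious discs/spheres,'' but a separating compression splits $S$ into a spanning piece $S_1$ and a piece $S_2$ that can be \emph{any} closed surface (or, for a boundary compression, a bounded surface whose boundary misses $m$, hence a meridian or trivial curve, cappable off in $S^3$). The paper's key step here is that no closed non-orientable surface embeds in $S^3$, so $S_2$ is orientable and non-orientability necessarily stays with $S_1$; then $\chi(S)=\chi(S_1)+\chi(S_2)-2$ with $\chi(S_2)\leq 2$ (respectively $-1$ and $\leq 1$ in the boundary case) shows $S_1$ is again a non-orientable spanning surface of genus exactly $\cc{K}$, so one discards $S_2$ and continues. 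Without this, your process has no reason to retain a surface of the right type: a priori the non-orientability could be stranded on the discarded piece. Finally, the control near $m$ is not a matter of suitability making ``bookkeeping tractable'': the paper relies on the specific property of the normalisation procedure that no step increases, and compressions can be performed so as not to decrease, the number of intersections with any edge, so the surface always cuts $m$ exactly once and Lemma~\ref{l-spanning-cut} applies throughout.
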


\begin{proof}
    Let $S$ be any non-orientable spanning surface of non-orientable
    genus $\cc{K}$, and let $m$ be the meridional boundary edge of $\tri$.
    Since the boundary of $S$ has algebraic intersection number $\pm1$
    with $m$, we can isotope $S$ so that the boundary of $S$ meets
    $m$ in precisely one point (a simple operation
    on the two-triangle torus boundary of $\tri$).

    We now follow the standard \emph{normalisation procedure} that converts
    an arbitrary properly embedded surface into an embedded normal surface
    (possibly with different topology).
    We do not reiterate the details of this procedure here;
    for full details the reader is referred to a standard reference
    such as \cite{jaco89-surgery} or \cite{matveev03-algms}.
    Instead we highlight some of its key aspects:

    \begin{itemize}
        \item Most steps in the procedure are isotopies,
        which preserve the
        fact that $S$ is a non-orientable spanning surface of genus $\cc{K}$.
        However, three types of step can alter the topology of $S$:
        \begin{itemize}
            \item \emph{internal compressions},
            which involve surgery on a disc that is bounded by a curve on $S$,
            as illustrated in Figure~\ref{fig-icompress};
            \item \emph{boundary compressions},
            which involve surgery on a disc that is
            bounded by an arc on $S$ and an arc on the boundary of $\tri$,
            as illustrated in Figure~\ref{fig-bcompress};
            \item \emph{deletion of trivial components},
            where we remove components of $S$ that are trivial spheres
            (bounding a ball in $\comp{K}$) or trivial discs
            (parallel into the boundary of $\comp{K}$).
        \end{itemize}

        \begin{figure}[htb]
            \centering
            \subfigure[An internal compression]{%
                \label{fig-icompress}
                \includegraphics[scale=0.9]{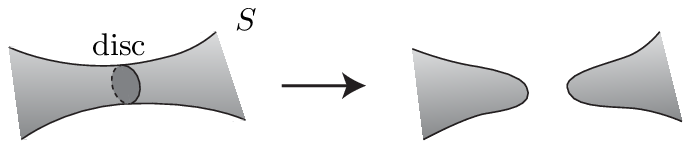}}
            \qquad\qquad
            \subfigure[A boundary compression]{%
                \label{fig-bcompress}
                \includegraphics[scale=0.9]{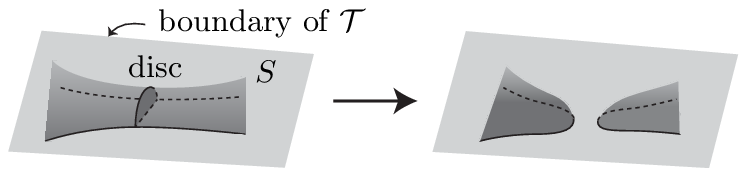}}
            \caption{Examples of normalisation moves
                that can alter the topology of $S$}
        \end{figure}

        \item For any edge $e$ of $\tri$, no step in the procedure
        will ever increase the number of intersections between the surface
        and $e$.  Moreover, each internal compression and
        each boundary compression
        can be performed in a manner that never reduces this number of
        intersections either.
    \end{itemize}

    We now analyse how internal compressions and boundary
    compressions can affect our non-orientable spanning surface $S$.
    At each stage, we assume that $S$ is a non-orientable spanning
    surface of minimum genus $\cc{K}$.
    \begin{itemize}
        \item For internal compressions, there are two cases to
        consider:
        \begin{itemize}
            \item If the boundary of the compression disc is a
            separating curve in $S$, then the compression splits $S$ into two
            disjoint pieces $S_1$ and $S_2$, where $S$ is the connected
            sum $S_1 \# S_2$.  Without loss of generality,
            we assume that the boundary
            curve of $S$ stays with $S_1$, and so $S_1$ is also a
            spanning surface and $S_2$ is a closed surface.

            Since $S=S_1 \# S_2$, at least one of $S_1$ and $S_2$ must
            be non-orientable, and since no closed non-orientable
            surface can embed in a knot complement, this
            non-orientable piece must be $S_1$.
            Moreover, a simple Euler characteristic calculation gives
            $\chi(S)=\chi(S_1)+\chi(S_2)-2$, and since $\chi(S_2)\leq2$
            it follows that $S_1$ (like $S$) must have the smallest possible
            non-orientable genus $\cc{K}$.
            Therefore we can simply delete $S_2$, replace $S$ with the
            non-orientable spanning surface $S_1$
            of genus $\cc{K}$, and continue the process.

            \item If the boundary of the compression disc is not
            separating in $S$, then the resulting surface is some
            spanning surface $U$ for which $\chi(U)=\chi(S)+2$.
            $U$ cannot be non-orientable, since its non-orientable genus
            would be less than the minimum $\cc{K}$.  Therefore $U$ is
            an orientable spanning surface with orientable genus
            $\frac12(1-\chi(U))=\frac12(1-\chi(S)-2)=\frac12(\cc{K}-2)$.
            This gives $\genus{K} \leq \frac12(\cc{K}-2)$ in contradiction to
            Clark's inequality (Theorem~\ref{t-clark}), and so this case
            can never occur.
        \end{itemize}

        \item For boundary compressions we first note that,
        since boundary compressions do not change the number of
        intersections with the meridional edge $m$,
        the resulting (possibly disconnected) surface still cuts $m$
        precisely once.
        \begin{itemize}
            \item If the compression separates $S$ into two disjoint
            surfaces $S_1$ and $S_2$, we may assume that $S_1$ cuts edge
            $m$ precisely once and $S_2$ does not cut $m$ at all.
            Therefore the boundary of $S_2$ must be either a meridian or
            a trivial curve on the boundary torus; either way, it can
            be filled with a disc in $S^3 \backslash \comp{K}$ to yield a
            closed surface in $S^3$.  Since no closed non-orientable
            surface can embed in $S^3$, it follows that $S_2$ must be
            orientable.

            Therefore the piece $S_1$ is non-orientable.  This time the
            Euler characteristic argument gives
            $\chi(S)=\chi(S_1)+\chi(S_2)-1$ with $\chi(S_2)\leq 1$.
            Therefore the piece $S_1$ is again a non-orientable spanning
            surface with genus $\cc{K}$, and we simply delete $S_2$,
            replace $S$ with $S_1$ and continue.

            \item If the compression does not separate $S$, the
            result is a spanning surface $U$ with $\chi(U)=\chi(S)+1$.
            In this case the normalisation process might fail.
            As before, $U$ cannot have non-orientable genus less than
            $\cc{K}$; therefore $U$ is an \emph{orientable} spanning surface
            with orientable genus $\frac12(1-\chi(U))=\frac12(\cc{K}-1)$.
            This gives $\genus{K} \leq \frac12(\cc{K}-1)$, and combined with
            Clark's inequality we obtain $\cc{K}=2\genus{K}+1$ precisely.
        \end{itemize}
    \end{itemize}

    If we follow the arguments above, deletion of trivial components is
    never required: we delete unwanted extra components as they appear
    during internal and boundary compressions, and the spanning surface
    itself is never a trivial sphere nor a trivial disc.

    In conclusion, either the normalisation procedure yields
    an embedded \emph{normal} non-orientable
    spanning surface of genus $\cc{K}$, or else we have a situation in which
    $\cc{K}=2\genus{K}+1$.
\end{proof}

We can now move from normal surfaces to \emph{fundamental} normal surfaces,
giving us the main theorem of this section.

\begin{theorem} \label{t-nor-fund}
    Let $K$ be any non-trivial knot,
    and $\tri$ be any efficient suitable triangulation of its complement.
    Then either there is a \emph{fundamental}
    normal non-orientable spanning surface of
    non-orientable genus $\cc{K}$, or else
    $\cc{K} \in \{2\genus{K},\ 2\genus{K}+1\}$.
\end{theorem}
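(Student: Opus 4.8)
The plan is to build directly on Lemma~\ref{l-nor-normal} and pass from a \emph{normal} spanning surface to a \emph{fundamental} one by decomposing in the Hilbert basis. Lemma~\ref{l-nor-normal} lets me assume there is a normal non-orientable spanning surface $S$ of genus $\cc{K}$, since otherwise $\cc{K} = 2\genus{K}+1$ and the conclusion already holds. I would then decompose $\vrep{S} = \sum_i \vrep{F_i}$ as a sum of fundamental normal surfaces; because $S$ satisfies the quadrilateral constraints, this can be arranged so that every $F_i$ is an embedded normal surface and every partial sum is admissible, whence Euler characteristic is additive, $\chi(S) = \sum_i \chi(F_i)$, and $\chi(S) = 1 - \cc{K}$ (non-orientable genus $\cc{K}$ with one boundary curve).

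The decisive bookkeeping is the edge weight along the meridian $m$. Since the number of intersections with $m$ is a linear functional of the coordinate vector, it is additive over the decomposition, and $S$ meets $m$ exactly once. Hence exactly one piece, say $F_0$, meets $m$ (once) and every other $F_i$ is disjoint from $m$. Being fundamental, each piece is connected, so $F_0$ has no closed components and meets $m$ once; by Lemma~\ref{l-spanning-cut} it is itself a spanning surface. For the remaining pieces I would argue that each $F_i$ is orientable with $\chi(F_i) \le 0$: a boundary curve disjoint from $m$ cannot be the trivial normal curve (which crosses every edge twice), so it is essential and parallel to $m$, and capping off by meridian discs in the knot neighbourhood produces a closed surface in $S^3$, which must be orientable. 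Efficiency rules out sphere components (and, via Lemma~\ref{l-eff-euler}, forces $\chi \le 0$ on any closed pieces), while boundary-irreducibility of a non-trivial knot complement rules out disc components, leaving $\chi(F_i) \le 0$.

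It then reduces to the orientability of the single meridian-crossing piece $F_0$. If $F_0$ is non-orientable, it is a fundamental non-orientable spanning surface, so its genus is at least $\cc{K}$ and $\chi(F_0) \le 1 - \cc{K} = \chi(S)$; combined with $\sum_{i \ge 1}\chi(F_i) \le 0$ this forces $\chi(F_0) = \chi(S)$, hence genus exactly $\cc{K}$, giving the first alternative. If instead $F_0$ is orientable, it is an orientable spanning surface, so $\chi(F_0) \le 1 - 2\genus{K}$; then $1 - \cc{K} = \chi(S) \le \chi(F_0) \le 1 - 2\genus{K}$ yields $\cc{K} \ge 2\genus{K}$, which together with Clark's inequality (Theorem~\ref{t-clark}) gives $\cc{K} \in \{2\genus{K},\ 2\genus{K}+1\}$, the second alternative.

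The step I expect to be the genuine obstacle is exactly this dichotomy on $F_0$: the normal sum (regular exchange) does not preserve orientability, so the non-orientability of $S$ need not be inherited by any one fundamental piece, and indeed all of the $F_i$ may be orientable even though $S$ is not. The content of the argument is that this ``bad'' case is precisely the one in which the Euler characteristic bound forces $\cc{K} \ge 2\genus{K}$, so that Clark's inequality closes the gap from above; notably, no minimality assumption on $S$ is required.
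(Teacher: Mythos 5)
Your proposal is correct, and its skeleton matches the paper's: isolate the unique summand crossing the meridian once (edge weights at $m$ are additive and non-negative), show every other summand has $\chi \leq 0$ because its boundary curves must be meridians (hence no disc pieces) while efficiency via Lemma~\ref{l-eff-euler} handles closed pieces, then split on the orientability of the spanning summand and close the orientable case with Clark's inequality via $\cc{K} \geq 2\genus{K}$. Where you genuinely diverge is the mechanism for reaching fundamentality: the paper picks $S$ with the \emph{fewest normal discs} and performs a single split $S = U + V$, ruling out non-orientable $U$ by contradiction with that minimality (so that branch only ever yields the second alternative); you instead decompose $\vrep{S}$ completely into fundamental pieces and read the dichotomy directly off $F_0$, with no extremal choice — as you correctly note. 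Your route needs two small facts the paper's avoids, both easy: fundamental surfaces are connected (a disconnected normal surface is the normal sum of its components), and every summand in the decomposition is admissible — this is automatic rather than something to be ``arranged'', since non-negative summands have support contained in the support of $\vrep{S}$, so the quadrilateral constraints are inherited (and additivity of $\chi$ needs no admissibility of partial sums anyway, since $\chi$ is linear by Lemma~\ref{l-ec-linear}). Two of your side remarks are harmless but unnecessary: the orientability of the $F_i$ for $i \geq 1$ is never used (only $\chi(F_i) \leq 0$ is), and your appeal to boundary-irreducibility of a non-trivial knot complement to exclude disc pieces can be replaced, as in the paper, by the observation that a meridian is homologically non-trivial in $\comp{K}$. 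What your version buys is a cleaner first alternative — you exhibit the fundamental non-orientable surface of genus $\cc{K}$ directly rather than arguing by contradiction; what the paper's version buys is that only one splitting, not a full decomposition into fundamental surfaces, ever needs to be considered.
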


\begin{proof}
    If there is no \emph{normal} non-orientable spanning surface of
    genus $\cc{K}$ then $\cc{K} = 2\genus{K}+1$, by
    Lemma~\ref{l-nor-normal}.
    Assume then that there is some normal non-orientable spanning
    surface of genus $\cc{K}$, and let $S$ be such a surface containing
    the fewest possible normal discs.  Let $m$ denote the meridional
    boundary edge of $\tri$.

    If $S$ is not fundamental then $S=U+V$ for some non-empty
    normal surfaces $U$ and $V$.  By Lemma~\ref{l-spanning-cut}, $S$ cuts the
    edge $m$ in precisely one point.  Since edge intersections
    are additive under normal sum\footnote{%
        This refers to absolute numbers of intersections,
        not algebraic intersection number.},
    we may assume without loss of generality that $U$ cuts $m$ precisely
    once and $V$ does not cut $m$ at all.  We may also assume that $U$
    is connected (since any components that do not meet $m$ can be moved
    over to $V$), whereby Lemma~\ref{l-spanning-cut}
    shows that $U$ is a spanning surface also.

    The surface $V$ is a union of closed components and/or bounded components.
    Every boundary curve of $V$ is disjoint from $m$ and is therefore a
    meridian (since any other \emph{normal} boundary curve, even the trivial
    curve, must cut $m$ at least once).  This means that no bounded component
    of $V$ can be a disc (because the meridian has non-trivial homology in
    $\comp{K}$, and so no disc in $\comp{K}$ can have a meridian as boundary).
    Therefore every bounded component of $V$ has non-positive
    Euler characteristic; combined with Lemma~\ref{l-eff-euler} this
    gives us $\chi(V) \leq 0$.
    Since Euler characteristic is additive under normal sum we have
    $\chi(S)=\chi(U)+\chi(V)$, and therefore $\chi(U) \geq \chi(S)$.

    If $U$ is non-orientable, it follows that $U$ is a normal non-orientable
    spanning surface of genus at most $\cc{K}$.  By minimality of $\cc{K}$
    this genus is precisely $\cc{K}$; moreover, since $S=U+V$ we see that $U$
    contains fewer normal discs than $S$.  This contradicts our initial
    choice of $S$.
    
    Therefore $U$ is orientable, and has genus
    $\leq \frac12(1-\chi(S)) = \frac12 \cc{K}$.
    Since $U$ is spanning this gives $2 \genus{K} \leq \cc{K}$,
    and by Clark's inequality it follows that
    $\cc{K} \in \{2\genus{K},\ 2\genus{K}+1\}$.
\end{proof}

Combining Theorems~\ref{t-or-fund} and \ref{t-nor-fund}
gives our first algorithm for computing crosscap number.

\begin{algorithm} \label{a-cc-normal}
    Given a knot diagram describing the knot $K \subseteq S^3$,
    the following procedure will either (i)~output the crosscap
    number $\cc{K}$, or (ii)~output a pair of integers one of which is
    $\cc{K}$.
    \begin{enumerate}
        \item Construct an efficient suitable triangulation $\tri$ of the
        complement $\comp{K}$ using Algorithm~\ref{alg-suitable}.

        \item Enumerate the set $\mathcal{F}$ of all fundamental
        normal surfaces in $\tri$.
        \label{en-alg-normal-fund}

        \item For each surface $S \in \mathcal{F}$, test whether $S$ is a
        spanning surface for $K$ and whether $S$ is orientable or
        non-orientable.
        Let $g_n$ be the minimum non-orientable genus amongst all
        non-orientable spanning surfaces in $\mathcal{F}$, or $\infty$
        if no non-orientable spanning surfaces are found.
        Let $g_o$ be the minimum orientable genus amongst all orientable
        spanning surfaces in $\mathcal{F}$.
        \label{en-alg-normal-test}

        \item If $g_o=0$ then output $0$.
        If $g_n \leq 2g_o$ then output $g_n$.
        Otherwise output $\{2g_o,\ 2g_o+1\}$.
        \label{en-alg-normal-out}
    \end{enumerate}
\end{algorithm}

As outlined in \cite{hass99-knotnp}, the enumeration in
step~\ref{en-alg-normal-fund} involves a high-dimensional Hilbert
basis computation, and can be performed using standard software such as
{\normaliz} \cite{bruns10-normaliz}.
In step~\ref{en-alg-normal-test}, we can test for orientability and
genus by reconstructing the normal surface, and we can test
whether $S$ is a spanning surface for $K$ using
Lemma~\ref{l-spanning-cut}.

\begin{theorem}
    Algorithm~\ref{a-cc-normal} is correct, i.e.,
    the crosscap number $\cc{K}$ is
    one of the solutions that it outputs in step~\ref{en-alg-normal-out}.
\end{theorem}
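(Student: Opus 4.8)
The plan is to show that $\cc{K}$ equals one of the outputs by carefully tracking which branch of step~\ref{en-alg-normal-out} is taken and invoking the two structural theorems already established. First I would dispatch the trivial special cases: since $K$ is assumed non-trivial (Algorithm~\ref{alg-suitable} handles the unknot separately in step~\ref{en-algsuitable-unknot}), we have $\genus{K} \geq 1$, so $g_o \geq 1$ and the output ``$0$'' branch is never taken for a non-trivial knot. The key observation is that, since $\tri$ is an efficient suitable triangulation, Theorem~\ref{t-nor-fund} applies directly and gives a clean dichotomy: either a fundamental normal non-orientable spanning surface of genus exactly $\cc{K}$ exists, or $\cc{K} \in \{2\genus{K},\,2\genus{K}+1\}$.

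Next I would establish that the enumerated quantities $g_n$ and $g_o$ correctly capture the relevant genera. By Theorem~\ref{t-or-fund}, there is a fundamental normal \emph{orientable} spanning surface of genus $\genus{K}$; since every orientable spanning surface has genus at least $\genus{K}$ by definition, the minimum orientable genus over $\mathcal{F}$ is exactly $g_o = \genus{K}$. For the non-orientable side, every non-orientable spanning surface has genus at least $\cc{K}$, so $g_n \geq \cc{K}$ always, with equality precisely when a fundamental non-orientable spanning surface realises $\cc{K}$.

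The heart of the argument is then a case split mirroring Theorem~\ref{t-nor-fund}. In the first case, where a fundamental non-orientable spanning surface of genus $\cc{K}$ exists, we have $g_n = \cc{K}$; combined with Clark's inequality $\cc{K} \leq 2\genus{K}+1 = 2g_o+1$, and since $\cc{K}$ is an integer, we get $g_n = \cc{K} \leq 2g_o+1$. I would need to verify that this forces the algorithm into the ``output $g_n$'' branch, i.e.\ that $g_n \leq 2g_o$: the only gap is the boundary value $g_n = 2g_o+1$. But in that situation $\cc{K} = 2\genus{K}+1$ sits in the set $\{2g_o,\,2g_o+1\}$, so the ``otherwise'' branch also outputs a correct pair---meaning either branch is safe at the boundary. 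In the second case, where Theorem~\ref{t-nor-fund} gives $\cc{K} \in \{2\genus{K},\,2\genus{K}+1\} = \{2g_o,\,2g_o+1\}$, I must confirm that the algorithm does not erroneously commit to ``output $g_n$'': here $g_n \geq \cc{K} \geq 2g_o$, so either $g_n > 2g_o$ (triggering the correct ``otherwise'' output) or $g_n = 2g_o = \cc{K}$ (in which case ``output $g_n$'' is again correct).

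The main obstacle I anticipate is this boundary-case bookkeeping around $g_n = 2g_o$ and $g_n = 2g_o+1$: I must ensure that whichever branch the strict-versus-non-strict test $g_n \leq 2g_o$ selects, the emitted value or pair genuinely contains $\cc{K}$. The cleanest way to present this is to argue that both candidate surfaces detected by the enumeration are \emph{actual} spanning surfaces (so $g_n$ and $g_o$ are genuine upper bounds on the respective genera realisable by normal surfaces), and that $\cc{K}$ is squeezed between the algebraic bounds $2g_o \leq \cc{K} \leq \min(g_n, 2g_o+1)$, after which the branch logic is verified to output something containing $\cc{K}$ in every combination. I do not expect any deep difficulty beyond this finite case check, since all the substantive topology has been absorbed into Theorems~\ref{t-or-fund} and~\ref{t-nor-fund} and Lemma~\ref{l-spanning-cut}.
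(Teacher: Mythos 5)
Your core case analysis is correct and is essentially the paper's own proof: establish $g_o = \genus{K}$ via Theorem~\ref{t-or-fund}, then split on whether a fundamental non-orientable spanning surface of genus $\cc{K}$ exists, using Clark's inequality to handle the boundary sub-case $g_n = 2g_o+1$ and Theorem~\ref{t-nor-fund} to certify the pair $\{2g_o,\,2g_o+1\}$ otherwise. Your bookkeeping in both cases (including the observation that in the second case $g_n \geq \cc{K} \geq 2g_o$ forces either the ``otherwise'' branch or the harmless coincidence $g_n = 2g_o = \cc{K}$) matches the paper's logic, with only a slightly different slicing: the paper cases on whether a genus-$\cc{K}$ surface appears in $\mathcal{F}$, making its two cases disjoint, whereas yours mirror the (non-exclusive) dichotomy of Theorem~\ref{t-nor-fund}; both work.

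Two blemishes, one of which is a real (if small) gap. First, you dismiss the unknot by saying $K$ ``is assumed non-trivial'' because Algorithm~\ref{alg-suitable} handles the unknot separately---but that step only substitutes a pre-constructed triangulation of the unknot complement; Algorithm~\ref{a-cc-normal} then runs on it in full, and the theorem being proved makes no non-triviality assumption. You must therefore verify the $g_o = 0$ branch, which the paper does in one line: if $K$ is the unknot then $g_o = \genus{K} = 0$ and the output $0$ equals $\cc{K} = 0$ by definition. This separate check is genuinely needed, since Theorem~\ref{t-nor-fund} itself requires $K$ non-trivial. Second, your closing ``cleanest presentation'' claims the squeeze $2g_o \leq \cc{K} \leq \min(g_n,\,2g_o+1)$, but the lower bound $2g_o \leq \cc{K}$ is false in general: it fails whenever $\cc{K} < 2\genus{K}$, e.g.\ for $(2,n)$-torus knots, which bound a {\mobius} band (so $\cc{K}=1$) while having genus $(n-1)/2$. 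That inequality holds only in the second case of the dichotomy (or as a consequence when $g_n > 2g_o$). Fortunately your explicit case-by-case argument never invokes it, so the proof stands once that sentence is deleted---but if you had actually restructured the proof around that squeeze, it would be wrong.
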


\begin{proof}
    By Theorem~\ref{t-or-fund}, $g_o = \genus{K}$.
    If $K$ is the unknot then $g_o=0$ and the algorithm
    outputs the correct value
    $\cc{K}=0$.  Assume then that $K$ is non-trivial, which means
    that $g_o>0$.

    If some non-orientable spanning surface of genus
    $\cc{K}$ does appear in the set $\mathcal{F}$ then we also have
    $g_n = \cc{K}$.  If $g_n \leq 2g_o$ then we output the correct value
    $g_n=\cc{K}$.  Otherwise Clark's inequality (Theorem~\ref{t-clark})
    gives $\cc{K}=2g_o+1$, and so the correct value of $\cc{K}$ appears in
    the output set $\{2g_o,\ 2g_o+1\}$.

    If no non-orientable spanning surface of genus $\cc{K}$
    appears in $\mathcal{F}$ then $g_n > \cc{K}$, and by
    Theorem~\ref{t-nor-fund} we also have
    $\cc{K} \in \{2\genus{K},\ 2\genus{K}+1\} = \{2g_o,\ 2g_o+1\}$.
    In this case $2g_o < g_n$, and again our output set
    $\{2g_o,\ 2g_o+1\}$ contains the correct value of $\cc{K}$.
\end{proof}

%
%

\section{An exact integer programming algorithm} \label{s-exact}

The bottleneck in Algorithm~\ref{a-cc-normal}
is the Hilbert basis computation (the enumeration of all
fundamental normal surfaces), which remains intractable for all
but the simplest knots.
For this reason we seek alternate algorithms that do not require an
enumeration of surfaces.  Instead we attempt to locate a minimal genus
non-orientable spanning surface directly using integer programming.

We begin this section with several results that allow us to
formulate topological constraints numerically.
We finish with Algorithm~\ref{a-cc-ipexact}, which gives the
full procedure for computing $\cc{K}$.

The first lemma is due to Hass, Lagarias and Pippenger \cite{hass99-knotnp},
and allows us to place upper bounds on the coordinates of fundamental
normal surfaces.

\begin{lemma}[Hass, Lagarias and Pippenger, 1999] \label{l-ubound}
    Let $\tri$ be a 3-manifold triangulation with $n$ tetrahedra, and
    let $S$ be a fundamental normal surface in $\tri$.
    Then every coordinate of the vector representation $\vrep{S}$
    is at most $n \cdot 2^{7n+2}$.
\end{lemma}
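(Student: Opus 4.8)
The plan is to recognise fundamental normal surfaces as Hilbert basis elements of the \emph{matching-equation cone}
$C = \{\mathbf{x}\in\R^{7n} : \mathbf{x}\ge 0,\ A\mathbf{x}=0\}$,
where $A$ is the integer matrix encoding the matching equations of Theorem~\ref{t-admissible}(ii), and then to bound such Hilbert basis elements by a standard simplicial-decomposition argument together with a determinant estimate on $A$.

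First I would show that if $S$ is fundamental, then $\vrep{S}$ is irreducible in the monoid $C\cap\Z^{7n}$, where I deliberately drop the quadrilateral constraints. Suppose $\vrep{S}=\mathbf{a}+\mathbf{b}$ with $\mathbf{a},\mathbf{b}$ nonzero non-negative integer vectors satisfying the matching equations. Since $0\le\mathbf{a},\mathbf{b}\le\vrep{S}$ coordinatewise and $\vrep{S}$ obeys the quadrilateral constraints (at most one nonzero quadrilateral coordinate per tetrahedron), each of $\mathbf{a}$ and $\mathbf{b}$ automatically inherits those constraints, so by Theorem~\ref{t-admissible} they are vector representations of normal surfaces $X,Y$. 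Then $S=X+Y$ contradicts fundamentality. Hence $\vrep{S}$ lies in the Hilbert basis of $C$, and it suffices to bound Hilbert basis elements of $C$.

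Next I would bound a Hilbert basis element $\mathbf{h}$. As $C$ sits inside the non-negative orthant it is pointed, so by the conic Carath\'eodory theorem $\mathbf{h}=\sum_{i=1}^{k}\lambda_i\mathbf{e}_i$ with $\lambda_i\ge 0$, where $\mathbf{e}_1,\dots,\mathbf{e}_k$ are linearly independent primitive integer generators of extreme rays of $C$ and $k\le\dim C\le 7n$. If some $\lambda_i\ge 1$, then $\mathbf{h}-\mathbf{e}_i$ is again a non-negative integer point of $C$: either it is zero, in which case $\mathbf{h}=\mathbf{e}_i$ is itself an extreme generator, or it exhibits $\mathbf{h}$ as a sum of two nonzero monoid elements, contradicting irreducibility. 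So every $\lambda_i<1$, and since each $\mathbf{e}_i\ge 0$ we get $\mathbf{h}\le\sum_{i=1}^{k}\mathbf{e}_i$ coordinatewise; thus each coordinate of $\mathbf{h}$ is at most $k\le 7n$ times the largest coordinate appearing among the primitive extreme generators.

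It then remains to bound the coordinates of each $\mathbf{e}_i$. An extreme ray is cut out by setting some coordinates to zero and keeping a full-rank square subsystem of the matching equations, so by Cramer's rule the coordinates of the primitive generator $\mathbf{e}_i$ are, up to cancelling a common factor (which only decreases them), minors of $A$ of order at most $7n-1$. Because $A$ has small integer entries and sparse columns, Hadamard's inequality applied columnwise bounds every such minor by a quantity of order $2^{7n}$; combined with the factor $k\le 7n$ and a careful accounting of constants this yields the stated bound $n\cdot 2^{7n+2}$. I expect this determinant estimate to be the main obstacle: a naive Hadamard bound overshoots, so obtaining an exponential of the correct order requires genuinely exploiting the structure (small entries and bounded column support) of the matching-equation matrix, and landing on precisely the constant $2^{7n+2}$ is the delicate bookkeeping step, which I would carry out following Hass, Lagarias and Pippenger \cite{hass99-knotnp}.
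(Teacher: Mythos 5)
Your proposal is correct, and it is in substance the original argument: the paper itself gives no proof of this lemma, importing it directly from Hass, Lagarias and Pippenger \cite{hass99-knotnp}, whose proof runs along exactly the lines you lay out --- fundamental surfaces are irreducible integer points of the matching-equation cone (your observation that the quadrilateral constraints are inherited coordinatewise by any summands of $\vrep{S}$ is the right justification, and matches the paper's remark that fundamental surfaces correspond precisely to Hilbert basis vectors of the solution cone), every such point decomposes as $\sum_{i=1}^{k}\lambda_i\mathbf{e}_i$ over $k\leq 7n$ linearly independent extreme generators with each $\lambda_i<1$ (else subtracting $\mathbf{e}_i$ contradicts irreducibility), and the extreme generators are bounded via Cramer's rule and Hadamard's inequality. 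One correction to your closing assessment: the bookkeeping is far less delicate than you fear, and the ``naive'' Hadamard bound does not overshoot. Each matching equation has the form $t_{i,j}+q_{i,k}=t_{i',j'}+q_{i',k'}$; when disc types coincide across a self-identification the terms cancel rather than stack, so every row of $A$ has entries in $\{0,\pm1\}$, at most four of them nonzero, hence Euclidean norm at most $2$. The row-wise Hadamard bound then gives every minor of order at most $7n-1$ a bound of $2^{7n-1}$, and combined with the Carath\'eodory factor $k\leq 7n$ you obtain coordinates at most $7n\cdot 2^{7n-1}$, which already sits below the stated bound since $n\cdot 2^{7n+2}=8n\cdot 2^{7n-1}$. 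So no structural exploitation beyond the $\{0,\pm1\}$ entries and four-term rows is required, and the constant $2^{7n+2}$ is comfortable rather than tight.
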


This bound plays an important role in our algorithm,
and it is desirable to improve it.
Techniques based on linear programming offer potential; for instance,
in \cite{burton10-tree} the authors significantly reduce this
bound for related problems.\footnote{%
    The paper \cite{burton10-tree} considers \emph{vertex} (not
    fundamental) normal surfaces, and works in a different coordinate
    system.}
We do not pursue these matters further here.

Our next result is the well-known observation that Euler characteristic
is linear on the normal surface solution cone.  Our proof
includes details on how such a linear function can be constructed.

\begin{lemma} \label{l-ec-linear}
    Let $\tri$ be an $n$-tetrahedron 3-manifold triangulation.
    Then there is a linear function $\chi\co\R^{7n} \to \R$ such that,
    for any normal surface $S$ in $\tri$, $\chi(\vrep{S})$ is the Euler
    characteristic of $S$.
\end{lemma}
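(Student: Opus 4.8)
The plan is to construct the linear function $\chi$ explicitly by assigning a local Euler characteristic contribution to each normal disc type, summed over all tetrahedra. The key idea is that a normal surface is built from triangles and quadrilaterals glued along normal arcs in the faces of $\tri$, so the global Euler characteristic $V-E+F$ (with the sign convention $\chi = V - E + F$, noting the paper writes $V-E-F$ but means the standard alternating sum) decomposes into contributions that are counted tetrahedron-by-tetrahedron, edge-by-edge, and face-by-face. Since every coordinate $t_{i,j}$ and $q_{i,k}$ counts discs of a fixed type, and each such disc contributes a fixed, type-independent amount to each of these tallies, the total Euler characteristic is a fixed linear combination of the coordinates of $\vrep{S}$.

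First I would set up the cell decomposition of $S$ induced by $\tri$. The 2-cells (faces) of this decomposition are exactly the normal discs: each normal triangle is a single 2-cell and each normal quadrilateral is a single 2-cell. The 1-cells (edges) are the normal arcs where discs meet the 2-dimensional faces of $\tri$, together with any arcs lying in the boundary; the 0-cells (vertices) are the points where the surface meets the edges of $\tri$. The cleanest way to make the counting linear is to distribute each cell's contribution among the tetrahedra containing it: a face of $\tri$ is shared by (at most) two tetrahedra, and an edge of $\tri$ is shared by several, so I would weight each normal arc by $\tfrac12$ (split between the two adjacent tetrahedra) and each intersection point with an edge of $\tri$ by the reciprocal of the number of tetrahedra around that edge. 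Each normal disc itself contributes $+1$ to $F$, contributes $\tfrac12$ to the edge count for each of its sides (three for a triangle, four for a quadrilateral), and contributes a fractional amount to the vertex count for each edge of $\tri$ it touches.

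Carrying this out, a normal triangle of type $j$ in tetrahedron $i$ has a fixed combinatorial pattern: it meets three faces of $\tri$ along three normal arcs and three edges of $\tri$ at three points; a quadrilateral meets four faces and four edges. Writing the weighted contribution to $V - E + F$ for a single disc gives a rational constant depending only on the disc type and on the local edge valences of $\tri$, and summing these constants against the coordinates $t_{i,j}, q_{i,k}$ yields the desired linear function $\chi \co \R^{7n} \to \R$. The matching equations guarantee that the $\tfrac12$-weighted arc contributions from adjacent tetrahedra combine correctly into whole arcs, and the valence-weighted point contributions combine into whole intersection points, so that for an actual normal surface $S$ the weighted sum recovers the genuine $V - E + F$.

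The main obstacle is getting the boundary and edge-valence bookkeeping exactly right so that the weights sum to integers on honest surfaces while the resulting function remains globally linear. Interior faces of $\tri$ are shared by two tetrahedra and interior edges by a valence that varies from edge to edge, whereas boundary faces belong to only one tetrahedron and boundary edges have a smaller valence; the weights $\tfrac12$ and $1/(\text{valence})$ must be assigned per face and per edge of $\tri$ rather than uniformly, and one must check that a normal arc lying in a boundary face (hence belonging to a single tetrahedron) is counted once overall, not a half. Verifying that these fractional local contributions reassemble into the correct integer $V$, $E$, $F$ for every normal surface—including the interaction with the boundary curves $\bdry S$—is the delicate part, but it is a finite, purely combinatorial verification depending only on the fixed incidence data of $\tri$, and once it is confirmed, linearity of $\chi$ is immediate because each coordinate enters with a constant coefficient.
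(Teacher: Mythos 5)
Your proposal is correct and takes essentially the same route as the paper's own proof: the paper likewise builds $\chi = V - E + F$ by giving each normal disc $+1$ towards $F$, each disc side $+1$ if it lies in the boundary of $\tri$ and $+1/2$ if internal, and each disc corner $+1/d$ where $d$ is the degree of the edge of $\tri$ containing it, then verifies these weights reassemble into the true cell counts exactly as you describe. Your flagged subtleties (boundary arcs counted once rather than half, and the sign typo $V-E-F$ versus the standard $V-E+F$) are both handled or confirmed in the paper's version.
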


\begin{proof}
    Let $\mathbf{x} \in \R^{7n}$.  We define
    $\chi(\mathbf{x})=V-E-F$, where:
    \begin{itemize}
        \item $F$ is the sum of all $4n$ triangular coordinates
        and all $3n$ quadrilateral coordinates of $\mathbf{x}$.
        \item $E$ is defined as a sum over edges of normal discs:
        if $e$ is one of the three edges of a normal triangle
        or one of the four edges of a normal quadrilateral,
        then $e$ contributes $+1$ to this sum if $e$ lies on the
        boundary of $\tri$, or $+1/2$ if $e$ is internal to $\tri$.
        \item $V$ is defined as a sum over vertices of normal discs:
        if $v$ is one of the three vertices of a normal triangle
        or one of the four vertices of a normal quadrilateral,
        then $v$ contributes $+1/d$ to this sum, where $d$ is the degree
        of the \emph{edge} of $\tri$ that $v$ lies within.
    \end{itemize}
    It is clear that $V$, $E$ and $F$ can all be expressed as sums of
    normal coordinates, and so $\chi$ is linear on $\R^{7n}$.
    Moreover: each vertex of degree $d$ in a normal surface $S$
    contributes a total of $+1/d \times d = +1$ to $V$ (since it meets
    $d$ normal discs, and it must lie within an edge of $\tri$ of degree $d$);
    each boundary edge of $S$ contributes $+1$ to $E$;
    each internal edge of $S$ contributes $+1/2 \times 2 = +1$ to $E$
    (since it meets two normal discs);
    and each triangular or quadrilateral face of $S$ contributes
    $+1$ to $F$.  Therefore $\chi(\vrep{S})=V-E+F$ is indeed the Euler
    characteristic of $S$.
\end{proof}

It should be noted that there are many linear functions
$\chi\co\R^{7n} \to \R$ with this property; the formulation above was
chosen for its simple proof.  There are sparser formulations that may
be preferable for computation; again we do not pursue this matter here.

Our final lemma allows us to arithmetically express the condition that a
given normal surface is a spanning surface for our knot.

\begin{lemma} \label{l-spanning-eqn}
    Let $K$ be any knot, and let $\tri$ be any suitable triangulation
    of its complement with meridional boundary edge $m$.
    Let $\Delta_i$ be any tetrahedron of $\tri$ that contains the
    boundary edge $m$, and let $t_{i,a}$, $t_{i,b}$, $q_{i,c}$ and $q_{i,d}$ be
    the coordinates describing the four normal disc types in $\Delta_i$
    that touch $m$,
    as illustrated in Figure~\ref{fig-discscutedge}.

    For any normal surface $S$ in $\tri$,
    $S$ is a spanning surface for $K$ if and only if $S$ has no closed
    components and the vector representation of $S$ satisfies
    $t_{i,a}+t_{i,b}+q_{i,c}+q_{i,d} = 1$.
    We call this equation the \emph{spanning equation} for $\tri$.
\end{lemma}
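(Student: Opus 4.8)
The plan is to reduce Lemma~\ref{l-spanning-eqn} to the already-established Lemma~\ref{l-spanning-cut}, which says that $S$ is a spanning surface if and only if $S$ has no closed components and $S$ meets $m$ in precisely one point. Since the ``no closed components'' clause appears verbatim in both statements, the real content is to show that the algebraic condition $t_{i,a}+t_{i,b}+q_{i,c}+q_{i,d}=1$ is equivalent to the geometric condition that $S$ intersects the edge $m$ in exactly one point. So I would first isolate this as the single claim to prove: for any normal surface, the number of times $S$ meets $m$ equals $t_{i,a}+t_{i,b}+q_{i,c}+q_{i,d}$.

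The key geometric fact is that the intersection points of $S$ with an edge $e$ of $\tri$ are precisely the points where normal discs touch $e$, and each normal disc in a tetrahedron $\Delta_i$ touches a given edge of $\Delta_i$ at most once (by the defining property that normal discs run between \emph{distinct} edges of the tetrahedron and have at most one vertex on each edge). Thus I would count, for each tetrahedron $\Delta_i$ containing $m$ as a boundary edge, which of the four triangle types and three quadrilateral types have a vertex sitting on $m$: referring to Figure~\ref{fig-discscutedge}, exactly two of the triangle types ($t_{i,a},t_{i,b}$) and two of the quadrilateral types ($q_{i,c},q_{i,d}$) touch $m$, while the remaining disc types avoid it entirely. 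Since each such disc contributes exactly one intersection point with $m$, the total number of intersections of $S$ with $m$ is $t_{i,a}+t_{i,b}+q_{i,c}+q_{i,d}$. The one subtlety I would address carefully is that $m$ is a \emph{boundary} edge of the suitable triangulation: by condition~(\ref{en-suitable-vtx}) in the definition of a suitable triangulation, each boundary edge lies in a unique boundary face and (after checking the local picture) is contained in exactly one tetrahedron, so there is genuinely a single index $i$ and no double-counting from $m$ being shared between distinct tetrahedra. An internal edge would require summing these coordinate contributions over all incident tetrahedra, but the boundary hypothesis makes the single-tetrahedron count suffice.

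With this counting identity in hand, the equivalence is immediate: $S$ meets $m$ in precisely one point if and only if $t_{i,a}+t_{i,b}+q_{i,c}+q_{i,d}=1$, and substituting this into Lemma~\ref{l-spanning-cut} gives exactly the stated biconditional. I expect the main obstacle to be the careful bookkeeping in identifying which disc types touch $m$ and verifying that $m$ meets $\Delta_i$ in a single edge rather than being folded onto itself under the face identifications; this is where appealing to the suitability hypothesis and Figure~\ref{fig-discscutedge} does the real work. The rest is a routine consequence of the normal-disc combinatorics together with the previously proved cutting lemma.
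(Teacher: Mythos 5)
Your reduction to Lemma~\ref{l-spanning-cut}, and your identification of the four disc types in $\Delta_i$ that meet $m$ (two triangle types through the endpoints of $m$ and two of the three quadrilateral types, each meeting $m$ exactly once), match the paper's very short proof. However, the step you flag as the ``one subtlety'' is resolved incorrectly, and this is a genuine gap. Condition~(\ref{en-suitable-vtx}) does \emph{not} imply that a boundary edge lies in a unique boundary face or in a single tetrahedron: on the two-triangle boundary torus each of the three boundary edges appears twice among the six edge slots of the two boundary triangles, and in the interior of $\tri$ the edge $m$ is surrounded by a fan of tetrahedron wedges whose number is the degree of $m$, typically larger than one. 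Indeed, the lemma's own hypothesis (``let $\Delta_i$ be \emph{any} tetrahedron of $\tri$ that contains the boundary edge $m$'') explicitly allows several such tetrahedra, and the paper's proof parenthetically stresses that the count is correct ``regardless of which tetrahedron $\Delta_i$ we chose''. As written, your argument has no valid justification that counting within one tetrahedron neither undercounts nor overcounts.

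The correct justification is local rather than global: at each point $p \in S \cap m$ the surface is transverse to $m$, and its small transverse disc meets the fan of wedges around $m$ in exactly one normal-disc corner \emph{per wedge}. Hence, within any one fixed tetrahedron $\Delta_i$ containing $m$ (as the single edge shown in Figure~\ref{fig-discscutedge}), each point of $S \cap m$ contributes exactly one to $t_{i,a}+t_{i,b}+q_{i,c}+q_{i,d}$, so the sum equals the number of intersection points no matter which $\Delta_i$ is chosen. Your closing remark about internal edges confirms the misunderstanding and is in fact backwards: for an internal edge $e$, counting corners within any \emph{single} incident tetrahedron again yields the number of points of $S \cap e$, whereas summing the contributions over all incident tetrahedra would overcount by a factor of the degree of $e$, since each intersection point is a corner of one disc in every wedge around the edge, not of one disc in just one tetrahedron. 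With this local wedge argument substituted for your uniqueness claim, the remainder of your proof (the substitution into Lemma~\ref{l-spanning-cut}) goes through exactly as in the paper.
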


\begin{figure}[htb]
    \centering
    \includegraphics[scale=0.9]{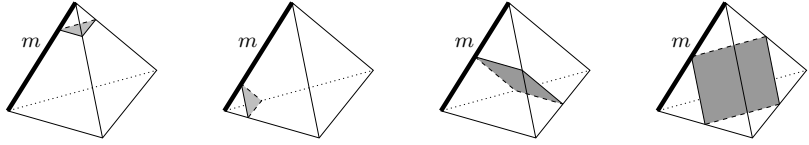}
    \caption{The four normal disc types in $\Delta_i$ that touch
        the meridional boundary edge $m$}
    \label{fig-discscutedge}
\end{figure}

\begin{proof}
    The sum $t_{i,a}+t_{i,b}+q_{i,c}+q_{i,d}$ counts the number of times
    that the surface $S$ cuts the meridional boundary edge $m$ (this is
    true regardless of which tetrahedron $\Delta_i$ we chose).
    The result then follows immediately from Lemma~\ref{l-spanning-cut}.
\end{proof}

\begin{algorithm} \label{a-cc-ipexact}
    Given a knot diagram describing the knot $K \subseteq S^3$,
    the following procedure will either (i)~output the crosscap
    number $\cc{K}$, or (ii)~output a pair of integers one of which is
    $\cc{K}$.
    \begin{enumerate}
        \item Construct an efficient suitable triangulation $\tri$ of the
        complement $\comp{K}$ using Algorithm~\ref{alg-suitable}.
        Let $n$ be the number of tetrahedra in $\tri$.

        \item Optimise the following integer program
        using an exact arithmetic integer programming solver:
        \begin{enumerate}[(i)]
            \item define the $7n$ non-negative integer variables
            $t_{i,j}$ for $1 \leq i \leq n$, $1 \leq j \leq 4$ and
            $q_{i,j}$ for $1 \leq i \leq n$, $1 \leq j \leq 3$;
            \item define the $3n$ binary variables
            $b_{i,j}$ for $1 \leq i \leq n$, $1 \leq j \leq 3$;
            \item add constraints for the matching equations
            and the spanning equation (Lemma~\ref{l-spanning-eqn});
            \item add constraints
            $n \cdot 2^{7n+2} \cdot b_{i,j} \geq q_{i,j}$
            for $1 \leq i \leq n$, $1 \leq j \leq 3$;
            \item add constraints
            $b_{i,1}+b_{i,2}+b_{i,3} \leq 1$ for $1 \leq i \leq n$;
            \item maximise the Euler characteristic function
            (Lemma~\ref{l-ec-linear}).
        \end{enumerate}
        \label{en-algipexact-opt}

        \item Let $\mathbf{x}$ be an
        optimal solution, and let $S$ be the corresponding normal
        surface with any closed components removed.
        If $S$ is non-orientable then output $1-\chi(S)$.
        Otherwise output the pair $\{1-\chi(S), 2-\chi(S)\}$.
        \label{en-algipexact-out}
    \end{enumerate}
\end{algorithm}

It is crucial that the integer programming solver be based on
exact integer arithmetic;
see \cite{applegate07-qsopt,cook11-exact-lncs} for examples of such tools.
With traditional floating-point solvers, round-off errors can creep
in, especially when working with coefficients as large as
$n \cdot 2^{7n+2}$.  Such round-off errors can result in a
sub-optimal solution, or even a solution $\mathbf{x}$ that
does not represent a spanning surface at all.

\begin{theorem}
    Algorithm~\ref{a-cc-ipexact} is well-defined (i.e.,
    the integer program in step~\ref{en-algipexact-opt} is not
    unbounded), and is correct (i.e., the crosscap number $\cc{K}$ is
    one of the solutions output in step~\ref{en-algipexact-out}).
\end{theorem}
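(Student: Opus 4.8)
The plan is to establish the two claims separately: first that the integer program is bounded above, and second that any optimal solution yields (or brackets) the crosscap number. For boundedness, I would observe that the objective is the Euler characteristic function $\chi$, which is linear on $\R^{7n}$ by Lemma~\ref{l-ec-linear}. The binary variables $b_{i,j}$ together with constraints (iv) and (v) encode precisely the quadrilateral constraints: constraint (iv) forces $b_{i,j}=1$ whenever $q_{i,j}>0$, and constraint (v) then permits at most one nonzero quadrilateral per tetrahedron. By Theorem~\ref{t-admissible}, the feasible integer points of the program are therefore exactly the vector representations of normal surfaces satisfying the spanning equation. Since any fundamental normal surface has coordinates bounded by $n\cdot 2^{7n+2}$ (Lemma~\ref{l-ubound}), the search space of fundamental pieces is finite; but more directly, I would argue that $\chi$ cannot grow without bound on feasible points because the feasible set lies in the normal surface solution cone intersected with the affine hyperplane given by the spanning equation. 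The key point is that Lemma~\ref{l-eff-euler} forbids closed normal surfaces of positive Euler characteristic in an efficient suitable triangulation, so adding any closed normal surface to a feasible solution does not increase $\chi$; combined with the bound on fundamental surfaces this shows the supremum of $\chi$ is attained and finite, so the program is not unbounded.

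For correctness, let $\mathbf{x}$ be an optimal solution and $S$ the corresponding normal surface with closed components removed. By Lemma~\ref{l-spanning-eqn}, the spanning equation guarantees $S$ meets $m$ exactly once, so after removing closed components $S$ is a spanning surface by Lemma~\ref{l-spanning-cut}. If $S$ is non-orientable with non-orientable genus $k$, then since $S$ has one boundary curve, $\chi(S)=1-k$, so $k=1-\chi(S)$, which the algorithm outputs; I must argue this $k$ equals $\cc{K}$. If $S$ is orientable with orientable genus $g$, then $\chi(S)=1-2g$, and the algorithm outputs $\{1-\chi(S),\,2-\chi(S)\}=\{2g,\,2g+1\}$.

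The crux is showing that maximising $\chi$ locates the minimum-genus spanning surface. The guiding observation is that for a spanning surface with one boundary curve, minimising genus is equivalent to maximising Euler characteristic: a non-orientable genus $k$ surface has $\chi=1-k$ and an orientable genus $g$ surface has $\chi=1-2g$. I would invoke Theorem~\ref{t-nor-fund}: either there is a fundamental normal non-orientable spanning surface of genus $\cc{K}$, or $\cc{K}\in\{2\genus{K},\,2\genus{K}+1\}$. In the first case the optimal $\chi$ is at least $1-\cc{K}$; I must check that no feasible spanning surface beats this, which follows because any non-orientable spanning surface has genus $\geq\cc{K}$ and any orientable one has genus $\geq\genus{K}$, and Clark's inequality (Theorem~\ref{t-clark}) relates these so that the maximal $\chi$ corresponds exactly to genus $\cc{K}$. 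When the optimal $S$ turns out orientable of genus $g_o$, I would argue $g_o=\genus{K}$ and reduce to the $\{2\genus{K},\,2\genus{K}+1\}$ bracket via Clark's inequality, mirroring the correctness argument for Algorithm~\ref{a-cc-normal}. The main obstacle I anticipate is the bookkeeping in the orientable case: verifying that the removal of closed components in step~\ref{en-algipexact-out} does not lower $\chi$ (it cannot, by Lemma~\ref{l-eff-euler}), and that an orientable optimum genuinely forces the two-value conclusion rather than permitting a better non-orientable surface to have been missed.
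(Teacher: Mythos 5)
Your overall strategy matches the paper's: boundedness via Lemma~\ref{l-eff-euler} (feasible points decompose as a spanning surface plus closed components, none of which can have positive Euler characteristic), and a two-sided correctness argument in which fundamental witness surfaces (Theorems~\ref{t-or-fund} and~\ref{t-nor-fund} plus Lemma~\ref{l-ubound}) supply the lower bound and Clark's inequality handles the orientable branch. However, one step in your argument is false and one case is missing. The false step is your claim that ``no feasible spanning surface beats'' $\chi = 1-\cc{K}$, so that ``the maximal $\chi$ corresponds exactly to genus $\cc{K}$.'' Whenever $\cc{K} = 2\genus{K}+1$, a minimal-genus orientable spanning surface has $\chi = 1-2\genus{K} > 1-\cc{K}$, so the optimum can strictly beat every non-orientable spanning surface; indeed, were your claim true, the algorithm's two-value output branch would be unnecessary, contradicting the paper's own remark that none of its algorithms resolve the case where Clark's inequality is tight. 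Fortunately the claim is also not needed: in the non-orientable branch one only requires the one-sided bound $\chi(S) \geq 1-\cc{K}$, which combined with the trivial $\cc{K} \leq 1-\chi(S)$ yields equality.

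The genuine gap is the missing quadrant of your case analysis: case~(b) of Theorem~\ref{t-nor-fund} (no fundamental non-orientable spanning surface of genus $\cc{K}$ exists, only $\cc{K} \in \{2\genus{K},\ 2\genus{K}+1\}$) combined with a \emph{non-orientable} optimum $S$. There the algorithm outputs the single value $1-\chi(S)$, your witness $S_n$ is unavailable, and you must instead compare against the orientable fundamental witness: $1-\chi(S) \leq 1-\chi(S_o) = 2\genus{K} \leq \cc{K}$, whence equality. The paper sidesteps your case split by first proving the uniform lower bound $1-\chi(S) \leq \cc{K}$ in both cases (a) and (b), and only then branching on the orientability of $S$. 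Relatedly, your assertion that the feasible integer points are ``exactly'' the normal-surface vectors satisfying the spanning equation ignores the big-$M$ caps $q_{i,j} \leq n \cdot 2^{7n+2}$; the feasible region is smaller, and verifying that the witnesses $S_o$ and $S_n$ satisfy these caps is precisely the role of Lemma~\ref{l-ubound} — which you cite, but for the wrong purpose (finiteness in the boundedness argument, where it is not needed: boundedness follows already from $\chi \leq 1$ for spanning surfaces together with Lemma~\ref{l-eff-euler}).
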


\begin{proof}
    The integer variables $t_{i,j}$ and $q_{i,j}$ correspond to the usual
    triangle and quadrilateral coordinates.
    Each binary variable $b_{i,j}$ has the following effect under
    condition (iv):
    if $b_{i,j}=0$ then it forces $q_{i,j}=0$, and if $b_{i,j}=1$ then
    we can have any $q_{i,j}$ in the range
    $0 \leq q_{i,j} \leq n \cdot 2^{7n+2}$.

    First consider any feasible solution to the integer program (optimal or
    otherwise).  Because any such solution satisfies
    all constraints from step~\ref{en-algipexact-opt}, we know that the
    variables $t_{i,j}$ and $q_{i,j}$ are non-negative integers, satisfy
    the matching equations and the spanning equation (condition~(iii)),
    and satisfy the quadrilateral constraints (condition~(v)).
    By Theorem~\ref{t-admissible} and Lemma~\ref{l-spanning-eqn}
    it follows that any feasible solution to our integer program describes
    a normal surface in $\tri$, which is the disjoint union of a
    spanning surface and zero or more additional closed components.

    If the integer program is unbounded then
    such a union can have arbitrarily large Euler characteristic.
    Because any spanning
    surface has Euler characteristic $\leq 1$, it follows that
    $\tri$ contains some closed normal surface of positive
    Euler characteristic.  However, this is impossible by
    Lemma~\ref{l-eff-euler}, and so the integer program is bounded
    and Algorithm~\ref{a-cc-ipexact} is well-defined.

    We now turn to the proof of correctness.
    By the argument above, the surface
    $S$ obtained in step~\ref{en-algipexact-out} (with closed
    components removed) must be a spanning surface.
    If $S$ is non-orientable then its non-orientable genus is
    $1-\chi(S)$, and we have $\cc{K} \leq 1-\chi(S)$.
    If $S$ is orientable then its orientable genus is
    $\frac12(1-\chi(S)) \geq \genus{K}$, and by Clark's inequality we have
    $\cc{K} \leq 2\genus{K}+1 \leq 2-\chi(S)$.

    Having bounded $\cc{K}$ from above, we now bound it from below.
    Let $S_o$ be the smallest-genus orientable
    normal surface whose normal coordinates are
    all at most $n \cdot 2^{7n+2}$,
    and let $S_n$ be the smallest-genus
    non-orientable normal surface whose normal
    coordinates are at most $n \cdot 2^{7n+2}$.
    By Theorem~\ref{t-or-fund} and
    Lemma~\ref{l-ubound}, the surface $S_o$ exists and has genus $\genus{K}$.
    By Theorem~\ref{t-nor-fund} and Lemma~\ref{l-ubound}, we have one
    of two cases for $S_n$: either
    (a)~$S_n$ exists and has non-orientable genus $\cc{K}$, or
    (b)~$\cc{K} \geq 2\genus{K}$ (in which case $S_n$ might or might not exist).

    Since $S_o$ and $S_n$ are normal spanning surfaces with coordinates
    $\leq n \cdot 2^{7n+2}$, they satisfy all of the constraints
    in step~\ref{en-algipexact-opt} of the algorithm
    (here each $b_{i,j}=0$ or $1$ according to whether the
    corresponding $q_{i,j}$ is zero or non-zero).  If $S'$ is the normal
    surface corresponding to the optimal solution $\mathbf{x}$,
    it follows that $\chi(S_o) \leq \chi(S')$,
    and that $\chi(S_n) \leq \chi(S')$ if $S_n$ exists.
    As before, $\tri$ contains no closed normal surfaces
    of positive Euler characteristic, and so after removing any closed
    components of $S'$ we obtain
    $\chi(S_o) \leq \chi(S)$, and $\chi(S_n) \leq \chi(S)$ if $S_n$ exists.

    We can now piece our various results together.
    In case~(a) above where $S_n$ exists with non-orientable
    genus $\cc{K}$, we have $\cc{K} = 1-\chi(S_n) \geq 1-\chi(S)$.
    In case~(b) we have
    $\cc{K} \geq 2\genus{K}=1-\chi(S_o) \geq 1-\chi(S)$.
    Either way we obtain the lower bound $1-\chi(S) \leq \cc{K}$.

    In conclusion: if $S$ is orientable then
    $1-\chi(S) \leq \cc{K} \leq 2-\chi(S)$, and if $S$ is
    non-orientable then
    $1-\chi(S) \leq \cc{K} \leq 1-\chi(S)$.
    Therefore the output of our algorithm is correct.
\end{proof}

%
%

\section{A limited precision integer programming algorithm} \label{s-inexact}

The exact integer programming algorithm in the previous section
avoids an expensive Hilbert basis enumeration, but it has its own
drawbacks.  Exact integer programming solvers are rarer and less
well-developed than their limited precision floating-point cousins,
and the exponentially large constraint coefficients $n \cdot 2^{7n+2}$
can have a crippling effect on performance.
Moreover, constructing an \emph{efficient} suitable triangulation
remains slow, as discussed in Section~\ref{s-suitable}.

Our final algorithm avoids these performance problems: we allow
floating-point solvers with limited precision, and we replace each large
coefficient $n \cdot 2^{7n+2}$ in our integer program
with the arbitrarily chosen small coefficient
$10000$.  We also drop the efficiency requirement and allow just
suitable triangulations, which are faster to construct.

These concessions bring about a loss of information and precision from our
solution.  In response, we add tests to ensure that the solution
to our integer program is valid (i.e., represents a spanning surface),
and we interpret the final output value as just an upper bound on
$\cc{K}$ (since we
cannot be sure that a better solution was inadvertently missed).
The complete algorithm is as follows.

\begin{algorithm} \label{a-cc-ipfloat}
    Given a knot diagram describing the knot $K \subseteq S^3$,
    the following procedure will output an upper bound $U$ for which
    $\cc{K} \leq U$.
    \begin{enumerate}
        \item Construct a suitable triangulation $\tri$ of the
        complement $\comp{K}$ using the fast but heuristic-based
        Algorithm~\ref{alg-suitable-simp}.
        If this algorithm produces no triangulation then
        output $\infty$ and terminate immediately.
        Otherwise let $n$ be the number of tetrahedra in $\tri$.

        \item Optimise the following integer program
        using a fast solver based on floating-point arithmetic:
        \begin{enumerate}[(i)]
            \item define the $7n$ non-negative integer variables
            $t_{i,j}$ for $1 \leq i \leq n$, $1 \leq j \leq 4$ and
            $q_{i,j}$ for $1 \leq i \leq n$, $1 \leq j \leq 3$;
            \item define the $3n$ binary variables
            $b_{i,j}$ for $1 \leq i \leq n$, $1 \leq j \leq 3$;
            \item add constraints for the matching equations
            and the spanning equation (Lemma~\ref{l-spanning-eqn});
            \item add constraints
            $10000 \cdot b_{i,j} \geq q_{i,j}$
            for $1 \leq i \leq n$, $1 \leq j \leq 3$;
            \item add constraints
            $b_{i,1}+b_{i,2}+b_{i,3} \leq 1$ for $1 \leq i \leq n$;
            \item maximise the Euler characteristic function
            (Lemma~\ref{l-ec-linear}).
        \end{enumerate}
        \label{en-algipfloat-opt}

        \item If this integer program is unbounded then
        output $\infty$ and terminate.

        \item Let $\mathbf{x}$ be an optimal solution.
        Using exact integer arithmetic,
        test whether $\mathbf{x}$ satisfies the constraints
        laid out in step~\ref{en-algipfloat-opt}.  If not then output
        $\infty$ and terminate.
        \label{en-algipfloat-test}

        \item Let $S$ be the normal surface described by $\mathbf{x}$
        with any closed components removed.
        Output $1-\chi(S)$ if $S$ is non-orientable,
        or $2-\chi(S)$ if $S$ is orientable.
    \end{enumerate}
\end{algorithm}

\begin{theorem}
    Algorithm~\ref{a-cc-ipfloat} is correct, i.e.,
    $\cc{K}$ is less than or equal to the output value.
\end{theorem}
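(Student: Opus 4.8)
The plan is to establish correctness by showing that whenever Algorithm~\ref{a-cc-ipfloat} outputs a finite value, that value is a genuine upper bound on $\cc{K}$, and that the cases where the algorithm bails out early (outputting $\infty$) are trivially correct since $\infty$ bounds everything. The key observation driving the whole argument is that the final output value is computed from an \emph{explicitly constructed} spanning surface $S$, so I do not need the integer program to be solved optimally or even correctly — I only need to verify that $S$ is in fact a valid spanning surface for $K$.

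\medskip

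First I would dispose of the early-termination cases. If Algorithm~\ref{alg-suitable-simp} produces no triangulation, or if the integer program is unbounded, or if the verification in step~\ref{en-algipfloat-test} fails, the algorithm outputs $\infty$, and $\cc{K} \leq \infty$ holds vacuously. This is where the concessions of the limited-precision approach pay off: we never claim the output is tight, only that it is an upper bound, so any failure simply falls back on $\infty$.

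\medskip

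The heart of the proof is the case where the algorithm reaches the final step and outputs a finite value. Here the crucial point is step~\ref{en-algipfloat-test}: the candidate solution $\mathbf{x}$ returned by the floating-point solver is re-checked \emph{using exact integer arithmetic} against all the constraints from step~\ref{en-algipfloat-opt}. Since we only proceed past this test when $\mathbf{x}$ exactly satisfies non-negativity and integrality, the matching equations, the spanning equation, and the quadrilateral constraints (enforced via the binary variables $b_{i,j}$ together with conditions (iv) and (v)), I can invoke Theorem~\ref{t-admissible} and Lemma~\ref{l-spanning-eqn} exactly as in the proof for Algorithm~\ref{a-cc-ipexact}. These guarantee that $\mathbf{x}$ describes a genuine normal surface which is the disjoint union of a spanning surface with zero or more closed components. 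After removing the closed components, the surface $S$ is therefore a legitimate spanning surface for $K$. From here the upper-bound computation is immediate: if $S$ is non-orientable its non-orientable genus is $1-\chi(S)$, giving $\cc{K} \leq 1-\chi(S)$; if $S$ is orientable its orientable genus is $\tfrac12(1-\chi(S)) \geq \genus{K}$, so Clark's inequality (Theorem~\ref{t-clark}) yields $\cc{K} \leq 2\genus{K}+1 \leq 2-\chi(S)$. In both cases the output matches or exceeds $\cc{K}$.

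\medskip

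\textbf{The main obstacle} is subtle but essential to articulate correctly: I must emphasise that the argument deliberately makes \emph{no} claim that the integer program is solved to genuine optimality. Replacing the coefficient $n \cdot 2^{7n+2}$ by $10000$ means the binary variables might fail to force the quadrilateral constraints when a coordinate legitimately exceeds $10000$, and the floating-point solver might return a point that is sub-optimal or even infeasible. The resolution is precisely that the exact re-verification in step~\ref{en-algipfloat-test} acts as a safety net: it discards any output that is not a bona-fide spanning surface, so that the only surfaces we ever report on are genuine. The correctness claim is therefore \emph{one-directional} — we never miss a spanning surface in a way that produces an invalid (too-small) answer, because every reported $S$ is verified to be spanning and its own genus provides an honest upper bound. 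This is exactly why the theorem asserts only $\cc{K} \leq U$ rather than equality, and the proof should make clear that no lower-bound argument (as appears in the proof for Algorithm~\ref{a-cc-ipexact}) is available or needed here.
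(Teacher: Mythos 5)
Your proposal is correct and takes essentially the same approach as the paper's own proof: dispose of the $\infty$ outputs trivially, use the exact-arithmetic re-verification of step~\ref{en-algipfloat-test} together with Theorem~\ref{t-admissible} and Lemma~\ref{l-spanning-eqn} to certify that $S$ is a genuine spanning surface, and then read off the bound ($1-\chi(S)$ in the non-orientable case, $2-\chi(S)$ via Clark's inequality in the orientable case), making no optimality claim. One minor correction to your closing discussion: any feasible point with some $q_{i,j}>0$ still forces $b_{i,j}=1$, so the coefficient $10000$ can never allow the quadrilateral constraints to be \emph{violated}---its only effect is to exclude valid surfaces with large coordinates from the feasible region, which is precisely why the output is merely an upper bound; this slip is in a side remark and does not affect the validity of your proof.
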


\begin{proof}
    This is a much simpler variant of the proof for
    Algorithm~\ref{a-cc-ipexact}.  If we output $\infty$ then the
    algorithm is clearly correct.  Otherwise we have a solution
    $\mathbf{x}$ that
    satisfies the constraints of step~\ref{en-algipfloat-opt}
    (as shown by the test in step~\ref{en-algipfloat-test}),
    but with no guarantee that this
    solution is optimal.

    As in Algorithm~\ref{a-cc-ipexact}, because $\mathbf{x}$
    satisfies the constraints from step~\ref{en-algipexact-opt}, the
    surface $S$ (with closed components removed) must be a spanning
    surface for $K$.  If $S$ is non-orientable then its non-orientable
    genus is $1-\chi(S)$, and so $\cc{K} \leq 1-\chi(S)$.
    If $S$ is orientable then its orientable genus is
    $\frac12(1-\chi(S)) \geq \genus{K}$, and by Clark's inequality we have
    $\cc{K} \leq 2\genus{k}+1 \leq 2-\chi(S)$.
\end{proof}

%
%

\section{Discussion and computational results} \label{s-comp}

As discussed in the introduction, Algorithms~\ref{a-cc-normal} and
\ref{a-cc-ipexact} remain too slow for practical use for all but the
simplest knots.  Algorithm~\ref{a-cc-normal} requires the
enumeration of a Hilbert basis for a high-dimensional polyhedral cone,
an extremely expensive procedure.
Algorithm~\ref{a-cc-ipexact} requires exact arithmetic integer
programming in problems with extremely large coefficients
(such as $n \cdot 2^{7n+2}$), which remains intractable with currently
available tools.

Despite this, both algorithms have certain benefits.  Unlike our final
algorithm, they produce no more than two possible solutions;
moreover, further analysis shows that if
$\cc{K} < 2\genus{K}$ then both Algorithms~\ref{a-cc-normal}
and \ref{a-cc-ipexact} guarantee a unique solution.
They also rely on substantially different underlying
computational problems (Hilbert basis enumeration versus exact integer
programming), both of which have supporting software that continues
to enjoy significant advances in efficiency
\cite{applegate07-qsopt,bruns10-normaliz,cook11-exact-lncs};
in this sense we are able to ``hedge our bets''.
A further avenue for improving the efficiency of Algorithm~\ref{a-cc-ipexact}
is to lower the coordinate bounds in Lemma~\ref{l-ubound}.

An important observation is that none of the algorithms in this paper
are able to give a unique solution in cases where Clark's inequality
is tight, i.e., $\cc{K}=2\genus{K}+1$.

In contrast to the first two algorithms, the limited-precision
Algorithm~\ref{a-cc-ipfloat} is fast and effective.  By combining the
upper bounds from this algorithm with lower bounds
from existing knot tables, we are able to make significant improvements
to these tables.

Specifically: the {\knotinfo} project contains a rich body of invariant
data for all 2977 prime non-trivial knots with $\leq 12$ crossings.
Of these knots, 289 have known crosscap numbers, and the remaining
2688 knots are listed with best-known upper and lower bounds.

For each of these knots, we begin with the corresponding
triangulation of the complement from the {\snappy} census \cite{snappy},
use {\regina} \cite{burton04-regina,regina}
to convert it into a triangulation with boundary faces,
and then run Algorithm~\ref{a-cc-ipfloat}
to obtain a bound on the crosscap number.
For the crucial optimisation step we use IBM's {\cplex} package
(version~12.2).
The triangulations range from 5 to 50 tetrahedra in size (with an
average of $33.27$), and the total running time over all triangulations is
roughly $4.5$ hours on a quad-core 2.93\,GHz Intel Core i7 CPU.

Our first observation is that Algorithm~\ref{a-cc-ipfloat}
never outputs $\infty$.  That is: for \emph{every} knot in the
tables, the heuristic Algorithm~\ref{alg-suitable-simp} produces a
suitable triangulation, the integer program is not unbounded (giving
strong evidence that these triangulations may in fact be \emph{efficient}
suitable triangulations), and the optimal solution to the integer
program satisfies all of the necessary constraints to produce a valid
spanning surface.

For 27 of the 2688 knots with unknown crossing number, the upper bound
produced by Algorithm~\ref{a-cc-ipfloat} is equal to the lower bound
listed in the {\knotinfo} tables; as a result we can identify the
crosscap number precisely.  These 27 knots and their crosscap numbers
are listed in Table~\ref{tab-new}.
For another 747 knots, our upper bound
improves upon the {\knotinfo} upper bound: on average we reduce the number
of possible solutions from $4.18$ to $2.86$.
Detailed results from all of these computations,
including a {\regina} data file listing the final spanning surfaces,
can be found at \url{http://www.maths.uq.edu.au/~bab/code/}.

\begin{table}
\centering
{\small
\begin{tabular}{c|c|c|c|c}
KnotInfo & Dowker-Thistlethwaite &
Genus & Previous bounds & New value \\
name & name & & on $C(K)$ & of $C(K)$ \\
\hline
$\mathrm{8}_{20}$ & $\mathrm{8n}_{1}$ & $2$ & $[2,4]$ & $2$ \\
$\mathrm{10}_{125}$ & $\mathrm{10n}_{15}$ & $3$ & $[2,4]$ & $2$ \\
$\mathrm{10}_{126}$ & $\mathrm{10n}_{17}$ & $3$ & $[2,4]$ & $2$ \\
$\mathrm{10}_{139}$ & $\mathrm{10n}_{27}$ & $4$ & $[2,3]$ & $2$ \\
$\mathrm{10}_{140}$ & $\mathrm{10n}_{29}$ & $2$ & $[2,4]$ & $2$ \\
$\mathrm{10}_{142}$ & $\mathrm{10n}_{30}$ & $3$ & $[2,4]$ & $2$ \\
$\mathrm{10}_{145}$ & $\mathrm{10n}_{14}$ & $2$ & $[2,4]$ & $2$ \\
$\mathrm{10}_{161}$ & $\mathrm{10n}_{31}$ & $3$ & $[2,5]$ & $2$ \\
$\mathrm{11n}_{102}$ & $\mathrm{11n}_{102}$ & $2$ & $[2,4]$ & $2$ \\
$\mathrm{11n}_{104}$ & $\mathrm{11n}_{104}$ & $4$ & $[2,4]$ & $2$ \\
$\mathrm{11n}_{135}$ & $\mathrm{11n}_{135}$ & $3$ & $[2,5]$ & $2$ \\
$\mathrm{12n}_{0121}$ & $\mathrm{12n}_{0121}$ & $2$ & $[2,4]$ & $2$ \\
$\mathrm{12n}_{0233}$ & $\mathrm{12n}_{0233}$ & $4$ & $[2,4]$ & $2$ \\
$\mathrm{12n}_{0235}$ & $\mathrm{12n}_{0235}$ & $4$ & $[2,4]$ & $2$ \\
$\mathrm{12n}_{0242}$ & $\mathrm{12n}_{0242}$ & $5$ & $[2,3]$ & $2$ \\
$\mathrm{12n}_{0404}$ & $\mathrm{12n}_{0404}$ & $2$ & $[2,4]$ & $2$ \\
$\mathrm{12n}_{0474}$ & $\mathrm{12n}_{0474}$ & $4$ & $[2,4]$ & $2$ \\
$\mathrm{12n}_{0475}$ & $\mathrm{12n}_{0475}$ & $3$ & $[2,4]$ & $2$ \\
$\mathrm{12n}_{0522}$ & $\mathrm{12n}_{0522}$ & $3$ & $[2,4]$ & $2$ \\
$\mathrm{12n}_{0575}$ & $\mathrm{12n}_{0575}$ & $4$ & $[2,4]$ & $2$ \\
$\mathrm{12n}_{0581}$ & $\mathrm{12n}_{0581}$ & $3$ & $[2,4]$ & $2$ \\
$\mathrm{12n}_{0582}$ & $\mathrm{12n}_{0582}$ & $2$ & $[2,4]$ & $2$ \\
$\mathrm{12n}_{0591}$ & $\mathrm{12n}_{0591}$ & $4$ & $[2,5]$ & $2$ \\
$\mathrm{12n}_{0721}$ & $\mathrm{12n}_{0721}$ & $4$ & $[2,4]$ & $2$ \\
$\mathrm{12n}_{0725}$ & $\mathrm{12n}_{0725}$ & $5$ & $[2,3]$ & $2$ \\
$\mathrm{12n}_{0749}$ & $\mathrm{12n}_{0749}$ & $3$ & $[2,5]$ & $2$ \\
$\mathrm{12n}_{0851}$ & $\mathrm{12n}_{0851}$ & $3$ & $[2,5]$ & $2$ \\
\end{tabular}
} 
\caption{The 27 knots with newly-computed crosscap numbers}
\label{tab-new}
\end{table}

To conclude, we note that the integer programming framework given in
Algorithms~\ref{a-cc-ipexact} and \ref{a-cc-ipfloat} extends beyond
the specific problem of computing crosscap numbers---it can be used as
a general framework for locating normal surfaces with various properties.
For instance, by optimising Euler characteristic
this integer programming framework can be used to test for
0-efficiency \cite{jaco03-0-efficiency}, compute connected sum
decompositions \cite{jaco03-0-efficiency}, and perform 3-sphere
recognition \cite{burton10-quadoct}.  Given current advances in
exact integer programming solvers, this potential for applying
optimisation techniques to problems in low-dimensional topology
is only beginning to be explored.

%
%

\section*{Acknowledgements}

The authors are grateful to the
Queensland Cyber Infrastructure Foundation and
RMIT University for the use of their
high-performance computing facilities.
The first author is supported by the Australian Research Council
under the Discovery Projects funding scheme
(projects DP1094516 and DP110101104).

%
%

\small
\bibliographystyle{amsplain}
\bibliography{pure}

%
%

\bigskip
\noindent
Benjamin A.~Burton \\
School of Mathematics and Physics,
The University of Queensland \\
Brisbane QLD 4072, Australia \\
(bab@maths.uq.edu.au)

\bigskip
\noindent
Melih Ozlen \\
School of Mathematical and Geospatial Sciences,
RMIT University \\
GPO Box 2476V,
Melbourne VIC 3001, Australia \\
(melih.ozlen@rmit.edu.au)

\end{document}